\newcommand{\HH}{\mathbb{H}^3}
\newcommand{\RR}{\mathbb{R}}
\newcommand{\CC}{\mathbb{C}}
\newcommand{\PSL}{{\rm PSL}_2\mathbb{C}}
\newcommand{\SU}{{\rm SU}(2)}
\newcommand{\dd}{{\rm d}}
\newcommand{\idx}{{\rm Ind}}
\newtheorem{maintheorem}{Theorem}[section]
\newtheorem{maincor}{Corollary}[section]
\newtheorem{lem}{Lemma}[section]
\newtheorem{prop}{Proposition}[section]
\newtheorem{defi}{Definition}[section]
\newtheorem{cor}{Corollary}[section]
\newtheorem{remark}{Remark}[section]
\title{ On the topology and index of minimal/Bryant framed surfaces}
\author{Davi Maximo}
\address{Department of Mathematics, University of Pennsylvania, Philadelphia, PA 19104}
\email{dmaxim@math.upenn.edu}
\author{Franco Vargas Pallete}
\address{Department of Mathematics, Yale University, New Haven, CT 06511, USA}
\email{franco.vargaspallete@yale.edu}
\date{}
\thanks{The first author was supported by NSF grant DMS-1910496 and a Sloan Fellowship. The second author was partially supported by NSF grant DMS-2001997.}
\begin{document}

\maketitle
\begin{abstract}
    We study framed surfaces, which are a class of Euclidean minimal and hyperbolic CMC-1 surfaces that generalize immersed minimal surfaces in $\mathbb{R}^3$ and Bryant surfaces. For this class we prove a lower bound on the (unrestricted) Morse index by a linear function of the genus, number of ends and number of branch points (counting multiplicity), generalizing a result by Chodosh and the first author (\cite{ChodoshMaximo}). We include as well a description of the 1-to-1 correspondence between Euclidean minimal and Bryant surfaces, known in the literature as \emph{Lawson's correspondence}.
\end{abstract}

\section{Introduction}

Minimal surfaces are critical points for the area functional and their variational theory has been studied extensively. One of the fundamental questions about these objects is the relationship between their Morse index and their geometry and topology. 

The case of immersed minimal surfaces of $\Sigma \rightarrow \mathbb R^3$ is arguably the most well understood. It has been known for a while that $\Sigma$ has finite Morse index if and only if $\Sigma$ has finite total curvature (\cite{Fischer-Colbrie}, \cite{GulliverLawson86}, \cite{Gulliver86}, \cite{Ros06}, \cite{Ross92}). More recently, O. Chodosh and the first author \cite{ChodoshMaximo16,ChodoshMaximo},  generalized these results by proving an explicit lower bound on the Morse index that depends on the genus
and number of ends, counting multiplicity. The case of branched immersions was later considered by Karpukhin (\cite[Proposition 2.3]{Karpukhin19}) and Meeks-Perez (\cite[Equation (3.1)]{MeeksPerez22}).

An important feature of the above-cited works is that the Jacobi operator of $\Sigma$, which comes from the second variation of the area, can be reduced to $-\Delta_\Sigma + 2K_\Sigma$, which is intrisic to the surface. The relationship between the Morse index and total curvature of $\Sigma$ can thus be motivated by long-standing questions about the eigenvalues of Schrodinger operators as in \cite{GNY,Grigor'yanYau}.

The starting point of our work concerns similar questions for surfaces in hyperbolic 3-space $\mathbb H^3$. As a result from F. Martin and B. White \cite{MaWh}, any open, orientable, connected surface can be properly embedded in $\mathbb H^3$ as a complete area-minimizing surface. Thus the concordance between the theory of minimal surfaces in $\mathbb H^3$ and in $\mathbb R^3$
is not a very good one. 

On the other hand, the results about minimal surfaces mentioned above use at many levels that minimal surfaces in $\mathbb{R}^3$ can be described through holomorphic functions and differential forms defined in $\Sigma$, which is known in the literature as \emph{Weiestrass representation}. In particular, one can verify that $\Sigma$ is minimal if and only if its Gauss map is weakly (anti) conformal.  For surfaces in hyperbolic space $\mathbb{H}^3$, Bryant showed in his seminal paper \cite{Bryant87} that the good analog of minimal surfaces for $\mathbb{H}^3$ are the solutions to a constrained variational problem for the area, the constant mean curvature one surfaces (CMC-1). For instance, a surface in $\mathbb{H}^3$ is CMC-1 if and only if its (hyperbolic) Gauss map is weakly (anti) conformal. Bryant also described the analog of the Weierstrass representation for these surfaces, which are known in the literature as \emph{Bryant surfaces}. The description of the Bryant-Weiestrass representation has been further explored by other authors (\cite{Small94}, \cite{KokubuUmeharaYamada}, \cite{RossmanUmeharaYamada}). Turns out that the analogy between Euclidean minimal surfaces and hyperbolic CMC-1 surfaces follows a $1$-to-$1$ correspondence known in the literature as \emph{Lawson correspondence} \cite{Lawson70}, which describes a correspondence between intrinsic CMC surfaces in space forms. Indeed, this correspondence matches two surfaces if they have the same induced metric and the same traceless second fundamental form, which can be understood intrinsically as two tensors in a surface satisfying the Gauss-Codazzi equations.

While we will use Lawson's correspondence to establish our results in hyperbolic geometry, it should be noted that extrinsic properties are not necessarily preserved, as it is the case for immersions or embeddings. Indeed, any immersed minimal surface in $\mathbb{R}^3$ has total curvature equal to an integer multiple of $4\pi$, which is false for immersed Bryant surfaces (\cite[Example 2]{Bryant87}). In the context of immersed Bryant surfaces, Lima and Rossman \cite{LimaRossman} have studied their index, while Rossman, Umehara and Yamada \cite{RossmanUmeharaYamadaIrreducible} have studied families of Euclidean minimal surfaces that deform into hyperbolic space as (immersed) Bryant surfaces. In contrast, we will expand the family of surfaces under study by considering \emph{equivariant} surfaces.

We say that $\Sigma$ is an equivariant (or intrinsic) CMC surface in a space form $M^3(\kappa)$ if we have a CMC immersion of its universal cover $\widetilde{\Sigma}$ in $M^3(\kappa)$ that is equivariant by an action of $\pi_1(\Sigma)$ on $M^3(\kappa)$ by isometries. Observe that under these assumptions the first and second fundamental forms are well-defined on $\Sigma$, and as we will see, this is the correct setup to establish Lawson's correspondence. These surfaces generalize immersed minimal surface (where $\pi_1(\Sigma)$ acts trivially) and they arise as critical points for the area functional when considering equivariant surfaces (not necessarily CMC) under equivariant perturbations that preserve the action of $\pi_1(\Sigma)$ on $M^3(\kappa)$. While in the literature the term Bryant surface has been used almost exclusively for either embedded or immmersed surfaces in $\mathbb{H}^3$, we will keep this notation for equivariant surfaces, while adding reminders whenever needed.

In order to discuss the second variation of area, we have to make a distinction between two-sided and one-sided equivariant surfaces. Fix along $\widetilde{\Sigma}$ one of the two continuous choices for normal vectors and denote it by $\overrightarrow{N}$ (the following does not depend on the choice between $\overrightarrow{N}$ and $-\overrightarrow{N}$). We say that $\Sigma$ is \emph{two-sided} if the action of $\pi_1(\Sigma)$ sends $\overrightarrow{N}$ to itself, otherwise we say that $\Sigma$ is {\it one-sided}. Observe that if $\Sigma$ is one-sided then it has a two-sided double cover $\widehat{\Sigma}$, namely the normal cover given by the subgroup of $\pi_1(\Sigma)$ that fixes $\overrightarrow{N}$. If $\Sigma$ is two-sided then for any compactly supported function $u$ in $\Sigma$ we consider the second variation of area given by the vector field $\widetilde{u}\overrightarrow{N}$, where $\widetilde{u}$ denotes the lift of $u$ to $\widetilde{\Sigma}$. If $\Sigma$ is one-sided and $\tau$ is the involution defined in the double cover $\widehat{\Sigma}$, then we consider compact supported functions $u$ in $\widehat{\Sigma}$ with $u\circ\tau = -u$, and we take the second variation of area along $\widetilde{u}\overrightarrow{N}$. Observe that as isometries of space form are determined by the image of an orientable orthonormal basis, then if $\Sigma$ is orientable then it has to be two-sided.

In the case of equivariant minimal surfaces in $\mathbb{R}^3$ or equivariant Bryant surfaces in $\mathbb{H}^3$, the Jacobi operator is still given by $-\Delta_\Sigma + 2K_\Sigma$, although applied to different spaces of functions. Indeed, the Morse index for a minimal surface is calculated while considering all functions $f$ in $\Sigma$ with compact support, while the Morse index for a CMC surface (with non-zero mean curvature) is calculated for compactly supported functions satisfying $\int_\Sigma f = 0$. This is because for minimal surfaces we consider all perturbations of area, while for CMC surfaces we consider all perturbations of area with the same enclosed volume. We will denote the unrestricted index by ${\rm Ind}(\Sigma)$ and the restricted index by ${\rm Ind}_{res}(\Sigma)$. As by work of Barbosa and B\'erard \cite{BarbosaBerard} and Lima and Rosmann \cite{LimaRossman} we have that ${\rm Ind}_{res}(\Sigma)={\rm Ind}(\Sigma)$ if $\Sigma$ is complete non-compact and ${\rm Ind}(\Sigma)-1\leq {\rm Ind}_{res}(\Sigma) \leq {\rm Ind}(\Sigma)$ if $\Sigma$ is closed, from now on we will focus our attention to the study of the unrestricted index ${\rm Ind}(\Sigma)$.

This article has two main objectives. On one hand, we generalize the lower bounds for Morse index of \cite{ChodoshMaximo} and \cite{Karpukhin19} for a certain subfamily of equivariant surfaces we will call \emph{framed} surfaces. A framed Euclidean minimal surface $\Sigma$ can be described in the following equivalent ways. We say that an equivariant minimal surface $\Sigma$ in $\mathbb{R}^3$ is $\emph{framed}$ if it satisfies any of the following 
\begin{enumerate}
    \item $\pi_1(\Sigma)$ acts by translations in $\mathbb{R}^3$.
    \item The Gauss map is well-defined in $\Sigma$.
    \item The fundamental harmonic forms $dx_1, dx_2, dx_3$ are well defined in $\Sigma$.
\end{enumerate}
The term framed was coined because of the last condition. We will prove:

\begin{maintheorem}\label{mainthm:index}
Let $\Sigma$ be a framed, two-sided and possibly branched minimal surface, and let $D=\sum_P n_P.P$ be the divisor defined as $n_P=-m$ if $P$ is an end of order $m$, and $n_P=m$ if $P$ is a branching point of order $m$ ($n_P=0$ otherwise). Let $h^1(D)$ denote the complex dimension of the space of holomorphic 1-forms with divisor $\geq D$. Then
\[{\rm Ind}(\Sigma) \geq \frac{2h^1(D)-3}{3}.
\]
\end{maintheorem}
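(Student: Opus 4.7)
The plan is to adapt the strategy of \cite{ChodoshMaximo} to the framed, possibly branched setting by producing a large test-function space for the Jacobi quadratic form out of holomorphic 1-forms on the compactification.

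Since $\Sigma$ is framed, I would first identify $\Sigma$ conformally with $\bar\Sigma\setminus E$, where $\bar\Sigma$ is a closed Riemann surface and $E$ is the finite set of ends. The Weierstrass forms $\phi_j = dx_j - i\star dx_j$ ($j=1,2,3$) are globally defined holomorphic 1-forms on $\Sigma$ that extend to meromorphic 1-forms on $\bar\Sigma$, and conformality of the immersion gives the isotropy relation $\phi_1^2+\phi_2^2+\phi_3^2\equiv 0$. The orders of the poles of the $\phi_j$ at ends and of their zeros at branch points are encoded by $D$: for each holomorphic 1-form $\eta$ on $\bar\Sigma$ with $\operatorname{div}(\eta)\geq D$ (a complex vector space of dimension $h^1(D)$), the pointwise quotients $\eta/\phi_j$ extend to smooth functions on $\bar\Sigma$, yielding (after standard cutoffs at the ends) an $\mathbb{R}^3$-valued admissible test function $u_\eta$ for the Jacobi operator.

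The analytic core of the proof is to establish
\begin{equation*}
    \sum_{j=1}^{3} Q\bigl((u_\eta)_j,(u_\eta)_j\bigr)\leq 0,
    \qquad Q(u,u)=\int_\Sigma \bigl(|\nabla u|^2+2Ku^2\bigr),
\end{equation*}
for every admissible $\eta$, with equality realized on a 3-dimensional kernel corresponding to a natural $\mathbb{R}^3$-worth of trivial variations. This should follow from a direct computation exploiting antiholomorphicity of the Gauss map (equivalent to minimality)---so that the coordinates $N_1,N_2,N_3$ lie in $\ker L$ where $L=-\Delta+2K$---together with the isotropy identity $\sum\phi_j^2\equiv 0$, paralleling \cite{ChodoshMaximo}.

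Finally, the index bound comes from a dimension count. The real dimension of admissible $\eta$'s is $2h^1(D)$. Nonpositivity of $Q$ on the image modulo the 3-dimensional kernel means the Morse index of $Q$, viewed as a quadratic form on $\mathbb{R}^3$-valued test functions, is at least $2h^1(D)-3$; since this vector-valued Morse index equals $3\,\operatorname{Ind}(\Sigma)$, one obtains $\operatorname{Ind}(\Sigma)\geq(2h^1(D)-3)/3$. I expect the main obstacle to lie in the negativity estimate at the branch points (which are absent from \cite{ChodoshMaximo}): one must use the divisor condition $n_P=m$ at a branching point of order $m$ to ensure that $u_\eta$ extends smoothly across the branch with all contributions to the Jacobi form well-defined, and that the construction remains compatible with the $\pi_1(\Sigma)$-equivariance underlying the framed hypothesis.
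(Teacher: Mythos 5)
Your overall strategy---producing a $2h^1(D)$-dimensional real space of $\mathbb{R}^3$-valued test functions from holomorphic $1$-forms with divisor $\geq D$, and counting dimensions against a $3\,{\rm Ind}(\Sigma)$-dimensional negative space---is indeed the strategy of the paper (following Ros and Chodosh--Maximo), and your final dimension count $3k\geq 2h^1(D)-3$ is the right one. However, your test-function construction has a genuine flaw. The quotients $\eta/\phi_j$ do \emph{not} extend smoothly to $\overline{\Sigma}$: each individual Weierstrass form $\phi_j$ has zeros away from the ends and branch points (for instance the third component vanishes wherever the Gauss map takes the value $0$ or $\infty$, while the other two components do not), and the condition $\operatorname{div}(\eta)\geq D$ says nothing about these points, so $\eta/\phi_j$ generically has poles there; moreover the three zero sets differ, so no single divisor condition on $\eta$ can repair all three quotients at once. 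The object actually used is the metric pairing $X_w=(\langle w,dx_1\rangle,\langle w,dx_2\rangle,\langle w,dx_3\rangle)$ for $w$ a harmonic $1$-form in the weighted space $L^2_{*}(\Sigma)$; since $|dx_j|_{ds^2}\leq 1$ this is globally defined and controlled by $|w|_{ds^2}$, and the divisor $D$ enters only through the $L^2_{*}$-integrability criterion of Proposition \ref{prop:L2deltaharmonics}.

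Second, the claimed inequality $\sum_jQ\bigl((u_\eta)_j,(u_\eta)_j\bigr)\leq 0$ is asserted rather than derived, and nonpositivity of $Q$ on a subspace does not by itself bound the index from below, since null directions contribute nothing to the index. The mechanism that closes the argument is Ros's identity $\Delta X_w-2KX_w=2\langle\nabla w,{\rm Re}(\sigma)\rangle\,N$ together with the pointwise orthogonality $\langle N,X_w\rangle=0$: after the cutoff estimates (which require the weighted eigenspace $W$ of Proposition \ref{prop:weightedspace} and the gradient bounds of Proposition \ref{prop:L^2gradient} and Lemma \ref{lem:L2gradientw}, not merely ``standard cutoffs''), one shows that $X_w\perp W$ forces $Q(X_w,Y)=0$ for \emph{all} admissible $Y$, hence $\langle\nabla w,{\rm Re}(\sigma)\rangle\equiv 0$ and $w\in{\rm span}\{*dx_1,*dx_2,*dx_3\}$. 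The condition $X_w\perp W$ is $3k$ linear equations on the $2h^1(D)$-dimensional space $V$ with solution space of dimension at most $3$, which gives the bound. So the skeleton of your proposal is correct, but the two analytic inputs that make it run---a globally well-defined bounded test function, and the rigidity statement replacing your ``$Q\leq 0$ with $3$-dimensional kernel''---are respectively incorrect and missing.
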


As pointed out by Meeks-Perez (\cite[Equation (3.1)]{MeeksPerez22}) we can apply Riemann-Roch to the above inequality to bound from below the Morse index by an explicit linear function of the Euler characteristic of $\Sigma$ and the order of the ends and branched points. Namely, we obtain
\[{\rm Ind}(\Sigma) \geq \frac{2(g(\Sigma)-1)-2{\rm deg}(D)-3}{3}.
\]
where $g(\Sigma)$ denotes the genus and ${\rm deg}(D) = \sum_P n_P$ denotes the degree of the divisor.

Our next objective is to state an analogous lower bound for the Morse index of framed Bryant surfaces, which was the original motivation for this project. It turns out that Lawson's correspondence translates very well all items included in the assumptions, proof and conclusion of Theorem \ref{mainthm:index}. More explicity, we observe the following.

\begin{maintheorem}\label{thm:Lawsonmirror}
    The Lawson's correspondence of an equivariant Euclidean minimal surface and an equivariant Bryant surface also matches, in addition of the induced metric and the traceless second fundamental form, the following:
    \begin{itemize}
        \item The Jacobi operator $-\Delta_\Sigma + 2K_\Sigma$.
        \item The (unrestricted) Morse index ${\rm Ind}(\Sigma)$.
        \item The Euclidean Gauss map with the secondary hyperbolic Gauss map.
        \item The property of being two-sided or one-sided. 
         \item The canonical Euclidean harmonic forms $dx_1, dx_2, dx_3$ with the canonical harmonic forms $w^1, w^2, w^3$ coming from the space of orthonormal frames in $\mathbb{H}^3$(see Equation \eqref{eq:harmonicformsPSL2C}).
        \item The property of being framed or not.
        \item The order of any end or branched point.
    \end{itemize}
\end{maintheorem}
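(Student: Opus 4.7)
The plan is to treat each bullet in turn, leveraging the fact that Lawson's correspondence matches two equivariant CMC immersions sharing the same induced metric $g$ and traceless second fundamental form $B$. Thus every quantity built intrinsically from $(g,B)$ is automatically preserved; this single principle takes care of the Jacobi operator, the Morse index, and the order of ends and branch points. In both ambient spaces one verifies the identity $|A|^2+\mathrm{Ric}(N,N)=-2K_\Sigma$: in $\RR^3$ the minimal condition gives $|A|^2=-2K_\Sigma$ and $\mathrm{Ric}\equiv 0$, while in $\HH$ the CMC-$1$ condition, together with the Gauss equation $K_\Sigma=-1+k_1k_2$ and $k_1+k_2=2$, yields $|A|^2=2-2K_\Sigma$ which cancels $\mathrm{Ric}(N,N)=-2$. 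Hence the Jacobi operator is $-\Delta_\Sigma+2K_\Sigma$ on both sides and the index $\idx(\Sigma)$ coincides, once the class of admissible test functions is identified; this in turn hinges on the two-sided/one-sided dichotomy being intrinsic.

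The two-sided/one-sided property is determined by whether the action of $\pi_1(\Sigma)$ on the orientation cover of the normal line bundle is trivial, and the equivariant action structure through $\pi_1(\Sigma)$ is by construction preserved by Lawson; hence the fourth bullet follows. The order of an end or branch point is read from the local conformal type and the asymptotics of $g$ near the point, both intrinsic, so the last bullet is immediate.

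The core new work sits in matching the Euclidean Gauss map with the secondary hyperbolic Gauss map (third bullet) and the harmonic forms $dx_i \leftrightarrow w^i$ (fifth bullet); from these, the framed characterization (sixth bullet) follows formally via the three equivalent descriptions stated in the introduction. For this I would write the correspondence concretely via the Weierstrass--Bryant null data: the Euclidean minimal immersion is the real part of $\int(1-g^2,\ i(1+g^2),\ 2g)\,\omega$ for a meromorphic function $g$ (the Euclidean Gauss map after stereographic projection) and a holomorphic 1-form $\omega$; the corresponding Bryant immersion is $f=FF^\ast$ for the null holomorphic frame $F\in\PSL$ whose Maurer--Cartan form $F^{-1}dF$ is the explicit $\mathfrak{sl}_2\CC$-valued 1-form built from the same pair $(g,\omega)$. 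A direct inspection then shows that the secondary hyperbolic Gauss map of $f$ is precisely this meromorphic $g$, identifying it with the Euclidean Gauss map of the Lawson partner. Decomposing the pullback of the Maurer--Cartan form along $F$ according to the splitting $\mathfrak{sl}_2\CC=\mathfrak{su}(2)\oplus i\mathfrak{su}(2)$, which mirrors the translation/rotation splitting in the Euclidean frame bundle, produces on the hyperbolic side a triple whose components are exactly $(w^1,w^2,w^3)$; on the Euclidean side the analogous procedure gives $(dx_1,dx_2,dx_3)$, and the shared data $(g,\omega)$ identifies them term by term.

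The main obstacle is the fifth bullet: the forms $w^i$ are defined invariantly on the orthonormal frame bundle of $\HH$, and to obtain a clean matching one must fix unambiguous normalizations of the basis of $\mathfrak{sl}_2\CC$ and verify that the projection producing $(w^1,w^2,w^3)$ from $F^{-1}dF$ corresponds exactly to reading off $(dx_1, dx_2, dx_3)$ from the Euclidean Weierstrass formula. Once this identification is pinned down, the remaining bullets follow by inspection and by the characterizations of framed surfaces already listed.
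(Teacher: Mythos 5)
Your overall route coincides with the paper's: feed the shared Weierstrass data $(g,\omega)$ (with $\omega=\eta=-S\lbrace f,g\rbrace\,dg^{-1}$) simultaneously into the Euclidean integral formula \eqref{eq:minimalimmersion} and into the null-frame equation $F^{-1}dF=\tfrac12\bigl(\begin{smallmatrix} g & -g^{2}\\ 1 & -g\end{smallmatrix}\bigr)\eta$, and observe that everything built from the common intrinsic pair $(ds^{2},\sigma)$ transfers automatically. Your verification that $|A|^{2}+{\rm Ric}(N,N)=-2K_{\Sigma}$ in both ambient spaces is correct and is exactly the reason the Jacobi operator, the index, and the orders of ends and branch points match. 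The identification of the secondary Gauss map with the Euclidean Gauss map also comes out of this construction for the reason you give: the same meromorphic $g$ appears in both representations.

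Two genuine gaps remain. First, what you label ``the main obstacle'' --- normalizing the $\mathfrak{su}(2)\oplus i\,\mathfrak{su}(2)$ splitting of $F^{-1}dF$ so that it yields $w^{i}=dx_{i}$ --- is precisely the computation the paper carries out, via Bryant's formula \eqref{eq:harmonicformsPSL2C}, obtaining $w^{1}=-\tfrac14{\rm Re}\bigl((1-g^{2})S\lbrace f,g\rbrace\,dg^{-1}\bigr)$ and its companions and matching them term by term against \eqref{eq:minimalimmersion}. Since the fifth and sixth bullets (and hence the notion of framedness that the rest of the paper depends on) rest entirely on this identification, deferring it leaves the substantive content of the theorem unproved; the computation is routine but must be done. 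Second, your argument for the two-sided/one-sided bullet is circular as stated: the $\pi_{1}(\Sigma)$-actions on $\RR^{3}$ and on $\HH$ are different representations, so ``the equivariant action structure is by construction preserved'' is exactly the assertion to be established. The paper's argument characterizes two-sidedness on each side by the equivariant well-definedness of the relevant Gauss map ($g$ on the Euclidean side, $f$ on the hyperbolic side) and passes between them by solving the Schwarzian equation $S\lbrace f,g\rbrace=-\sigma$ with $\sigma$ defined on all of $\Sigma$; alternatively, since both representations take values in orientation-preserving isometries, one-sidedness on either side is equivalent to the intrinsic condition that some deck transformation reverses the orientation of $\widetilde{\Sigma}$. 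Either argument is short, but one of them is needed.
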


Thus we have the exact same result to Theorem \ref{mainthm:index} for framed Bryant surfaces if we replace minimal by Bryant at every instance. Moreover, as we will see later, the proof of Theorem \ref{mainthm:index} is written entirely intrinsically, so we are actually proving both results at once. Hence in the relevant sections we will simply refer to framed surfaces, which will denote either Euclidean minimal or Bryant surfaces.

For the case ${\rm Ind}(\Sigma)=0$ Theorem \ref{mainthm:index} implies that $\Sigma$ is either a plane, an annuli or a tori. Hence it is easy to conclude the following corollary.

\begin{maincor}
    The only framed, two-sided, minimal surfaces without branch points that are stable are the Euclidean plane, Euclidean cylinder and Euclidean tori. Equivalently, the only framed, two-sided, no branched Bryant surfaces are the horosphere, horocylinder and horotori.
\end{maincor}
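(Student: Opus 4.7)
The plan is to deduce both halves of the corollary from Theorem \ref{mainthm:index} combined with Theorem \ref{thm:Lawsonmirror}. First, assume ${\rm Ind}(\Sigma) = 0$ with no branch points; then the inequality ${\rm Ind}(\Sigma) \geq (2h^1(D) - 3)/3$ forces $h^1(D) \leq 1$, so the complex space of holomorphic $1$-forms on the compactification $\overline{\Sigma}$ with divisor $\geq D$ is at most one-dimensional.

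Next, I would use the framed hypothesis to exhibit three such forms. In the Euclidean case, the globally defined $dx_1, dx_2, dx_3$ have holomorphic parts $\phi_i$ which are holomorphic $1$-forms on $\Sigma$ extending meromorphically to $\overline{\Sigma}$ with poles of order $m_P$ at each end; hence ${\rm div}(\phi_i) \geq D$ by the very definition of $D$. Since $\Sigma$ is a genuine surface, the $\phi_i$ are not all zero, so they lie in a space of dimension $h^1(D) = 1$ and are therefore $\mathbb{C}$-proportional. Plugging this into the Weierstrass formulas $\phi_1 = \tfrac{1}{2}(1-g^2)\omega$, $\phi_2 = \tfrac{i}{2}(1+g^2)\omega$, $\phi_3 = g\omega$ forces the Euclidean Gauss map $g$ to be constant. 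By Theorem \ref{thm:Lawsonmirror}, the same argument applied to $w^1, w^2, w^3$ shows that the secondary hyperbolic Gauss map of a stable, framed, unbranched Bryant surface is constant.

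Finally, a complete framed minimal (resp.\ Bryant) surface with constant Euclidean Gauss map (resp.\ constant secondary hyperbolic Gauss map) is contained in a plane of $\mathbb{R}^3$ (resp.\ a horosphere of $\mathbb{H}^3$), so its universal cover $\widetilde{\Sigma}$ is isometric to flat $\mathbb{R}^2$. The framed hypothesis gives $\pi_1(\Sigma)$ acting on $\widetilde{\Sigma}$ by (parabolic) translations, and a discrete subgroup of $\mathbb{R}^2$ has rank $0$, $1$, or $2$, yielding the Euclidean plane, cylinder, or torus in one case and the horosphere, horocylinder, or horotorus in the other. The main subtlety is really the step ``$h^1(D) \leq 1$ forces constant Gauss map''; the rest is the classical classification of flat quotients of $\mathbb{R}^2$.
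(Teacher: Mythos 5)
Your argument is correct, and it is in fact more detailed than what the paper offers: the authors only remark that Theorem \ref{mainthm:index} with ${\rm Ind}(\Sigma)=0$ forces $\Sigma$ to be a plane, an annulus or a torus (implicitly via the Riemann--Roch reformulation $2(g-1)-2\deg(D)\leq 3$, i.e.\ $g+e\leq 2$ in the unbranched case) and that the corollary is then ``easy to conclude.'' Your route is genuinely different at the key step: rather than first pinning down the topology and then identifying the surfaces case by case (which, done honestly, requires ruling out e.g.\ a genus-zero surface with two order-one ends and degree-one Gauss map, an Osserman-type classification argument), you use the inequality in its raw form $h^1(D)\leq 1$ and observe that the three global Weierstrass forms $\phi_1,\phi_2,\phi_3$ all lie in the space of $1$-forms with divisor $\geq D$ --- the divisor bound at an end of order $m$ following from $|z|^{2m}ds^2=|z|^{2m}\sum_i|\phi_i|^2$ being bounded --- hence are pairwise proportional, which forces $g$ constant and the surface totally geodesic. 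This buys you the geometric rigidity directly, with no appeal to classification results, and transfers verbatim to the Bryant side through Theorem \ref{thm:Lawsonmirror} (constant secondary Gauss map gives $\sigma=2(p\,dq-q\,dp)\varphi\equiv 0$, hence an umbilic piece of a horosphere). Two small points you may wish to make explicit: the $\phi_i$ cannot all vanish since $ds^2=\sum_i|\phi_i|^2$ is nondegenerate, so in fact $h^1(D)=1$ exactly; and in the final step the deck translations of $\mathbb{R}^3$ preserve the open image $e_0(\widetilde{\Sigma})$ inside the plane $P$, hence are parallel to $P$, so the classification of discrete translation subgroups of $\mathbb{R}^2$ indeed applies.
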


\subsection{Some examples} For ${\rm Ind}(\Sigma)=1$ we have that Theorem \ref{mainthm:index} is sharp, as the Schwarz's P surface is Euclidean minimal, framed, has genus equal to $3$ and index equal to $1$. Another example is given by Scherk's doubly periodic surface, whose quotient has genus $0$, $4$ ends and index $1$ (see \cite[Corollary 15]{MontielRos}). By Remark \ref{remark:rotation} and the description of framedness, each of these surfaces defines a $1$-parameter family of examples with ${\rm Ind}(\Sigma)=1$ where our inequality is sharp. 

Another family of examples are the \emph{catenoid cousins}, whose index was studied in \cite{LimaRossman}. This is a family of Bryant surfaces on annuli depending on a real parameter $\mu>-\frac12, \mu\neq 0$, and as we will see in \ref{subsec:examples} they are framed if and only if $2\mu$ is an integer. By \cite[Theorem 6.1]{LimaRossman} we have that for $\mu<0$ the catenoid cousins have index $1$ which is the lower bound given by Theorem \ref{mainthm:index}. Then for the values $-\frac12<\mu<0$ we have embedded Bryant surfaces that are not framed but have low index, which raises the question of whether a similar bound as in Theorem \ref{mainthm:index} can be shown for embedded or immersed Bryant surfaces. 

Another interesting question would be the classification of framed Bryant surfaces of low index, even under some symmetry assumption. For instance in the index 1 case, Theorem \ref{mainthm:index} restricts the topology for non-branched surfaces to $g+e\leq 4$, where $g$ is the genus and $e$ is the sum of orders of all ends. One may wonder if some ideas in the vein of \cite{Chen} could lead to a classification under a suitable symmetry assumption.

\subsection{The one-side case} Finally, in the same vein, we will also prove the following theorem for one-sided framed surfaces.

\begin{maintheorem}\label{mainthm:non-orientable}
Let $\Sigma$ be a framed, one-sided and possibly branched minimal (or Bryant) surface, $\widehat{\Sigma}$ the two-sided cover of $\Sigma$ and let $D=\sum_P n_P.P$ be the divisor in $\widehat{\Sigma}$ defined as $n_P=-m$ if $P$ is an end of order $m$, and $n_P=m$ if $P$ is a branching point of order $m$ ($n_P=0$ otherwise). Let $h^1(D)$ denote the complex dimension of the space of holomorphic 1-forms with divisor $\geq D$. Then
\[{\rm Ind}(\Sigma) \geq \frac{h^1(D)-3}{3}.
\]
\end{maintheorem}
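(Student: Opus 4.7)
The plan is to reduce Theorem \ref{mainthm:non-orientable} to the argument already developed for Theorem \ref{mainthm:index} by passing to the two-sided double cover $\widehat{\Sigma}$ and carefully tracking equivariance under the deck involution $\tau$. By the definitions recalled in the introduction, ${\rm Ind}(\Sigma)$ equals the number of negative eigenvalues of the Jacobi operator $J=-\Delta_{\widehat{\Sigma}}+2K_{\widehat{\Sigma}}$ on compactly supported functions $u$ on $\widehat{\Sigma}$ satisfying $u\circ\tau=-u$. Since $\tau$ permutes ends and branched points while preserving their orders, the divisor $D$ on $\widehat{\Sigma}$ is $\tau$-invariant, and therefore $\tau^\ast$ acts on $H^0(\widehat{\Sigma},\Omega^1(D))$, splitting it into $\pm 1$ eigenspaces of total complex dimension $h^1(D)$.

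Next I would adapt the construction of test functions from the proof of Theorem \ref{mainthm:index}. There each holomorphic $1$-form $\omega$ with divisor $\geq D$ is paired with the canonical harmonic forms $dx_1,dx_2,dx_3$ (or $w^1,w^2,w^3$ in the Bryant case, cf.\ Theorem \ref{thm:Lawsonmirror}) to yield a collection of real-valued test functions whose real span has dimension $2h^1(D)$. The involution $\tau$ acts on this span through its $\tau^\ast$-action on $\omega$ combined with its action on the $dx_i$, the latter being determined by the fact that $\tau$ is realized by an ambient isometry reversing the chosen normal $\overrightarrow{N}$. The central claim would then be that the $\tau$-anti-equivariant subspace has real dimension exactly $h^1(D)$: the anti-equivariance condition couples the $\tau^\ast$-eigenspaces of $H^0(\widehat{\Sigma},\Omega^1(D))$ with the real and imaginary parts of the constructed functions in a way that halves the natural count from $2h^1(D)$ down to $h^1(D)$.

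From this dimensional input, the conclusion follows just as in Theorem \ref{mainthm:index}: inside the $h^1(D)$-dimensional real space of anti-equivariant test functions, a universal three-dimensional subspace (coming from ambient translational Jacobi fields, or equivalently from the three canonical harmonic forms) cannot contribute negative directions, and the triple packaging provided by $dx_1,dx_2,dx_3$ (resp.\ $w^1,w^2,w^3$) organizes the remaining test functions in blocks of three, giving
\[{\rm Ind}(\Sigma) \geq \frac{h^1(D)-3}{3}.\]

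The main obstacle is the halving claim in the second paragraph, i.e., verifying that the $\tau$-anti-equivariant subspace of the test-function space has real dimension exactly $h^1(D)$ rather than $2h^1(D)$. This hinges on a careful bookkeeping of how $\tau$ acts on the canonical harmonic forms $dx_i$ (resp.\ $w^i$) in combination with the $\tau^\ast$-splitting of $H^0(\widehat{\Sigma},\Omega^1(D))$, and on checking that the two actions combine to identify the anti-equivariant test-function space with a single real form of the complex vector space $H^0(\widehat{\Sigma},\Omega^1(D))$. Once that identification is in place, the argument of Theorem \ref{mainthm:index} transcribes directly to the anti-equivariant subspace, and the factor-of-two saving in passing from ${\rm Ind}$ on the two-sided double cover to ${\rm Ind}$ on the one-sided quotient accounts for replacing $2h^1(D)$ by $h^1(D)$ in the final bound.
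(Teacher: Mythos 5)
Your overall strategy is the same as the paper's: restrict to the two-sided double cover $\widehat\Sigma$, use that admissible test functions for the one-sided index are those with $u\circ\tau=-u$, observe that for a framed surface the forms $dx_1,dx_2,dx_3$ (resp.\ $w^1,w^2,w^3$) are $\tau$-invariant so that the components of $X_w$ are admissible precisely when $w$ is $\tau$-anti-invariant, and then rerun the argument of Theorem \ref{mainthm:index} on the anti-invariant harmonic forms. The problem is that you leave the one genuinely new step --- that the anti-invariant part of $V=\mathcal{H}^1(\widehat\Sigma)\cap L^2_*(\widehat\Sigma)$ has real dimension $h^1(D)$ --- as an acknowledged obstacle rather than proving it, and the way you frame it ($\tau^*$ ``acting on $H^0(\widehat\Sigma,\Omega^1(D))$ with $\pm1$ eigenspaces of total complex dimension $h^1(D)$'') is not correct as stated: since $\Sigma$ is one-sided it is non-orientable, $\widehat\Sigma$ is its orientation double cover, and $\tau$ is orientation-reversing, so $\tau^*$ is conjugate-linear and does not preserve the space of holomorphic $1$-forms. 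The eigenspace decomposition has to be taken on the real $2h^1(D)$-dimensional space $V$ of real harmonic $1$-forms in $L^2_*$.

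The missing argument, which is how the paper (following Chodosh--Maximo) gets the count, is that the two eigenspaces $\mathcal{H}^1_\pm(\widehat\Sigma)\cap L^2_*(\widehat\Sigma)$ are isomorphic, hence each has real dimension exactly $h^1(D)$. Concretely, because $\tau$ is anti-conformal the Hodge star on $1$-forms anticommutes with $\tau^*$, and $*$ preserves $V$: it sends ${\rm Re}\,\omega$ to $-{\rm Im}\,\omega$ for $\omega$ holomorphic, so it preserves both the divisor condition $D_\omega\geq D$ of Proposition \ref{prop:L2deltaharmonics} and the $L^2_*$ condition. Hence $*$ maps $\mathcal{H}^1_+\cap L^2_*$ isomorphically onto $\mathcal{H}^1_-\cap L^2_*$, and each has real dimension $\tfrac12\cdot 2h^1(D)=h^1(D)$. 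With that in hand the rest of your outline does transcribe directly: the map $w\mapsto X_w$ restricted to $\mathcal{H}^1_-\cap L^2_*$ has kernel contained in ${\rm span}\{*dx_1,*dx_2,*dx_3\}$, of dimension at most $3$, and $X_w\in W^\perp$ imposes $3k$ linear conditions, giving $3k\geq h^1(D)-3$. (Your closing remark attributing the halving to ``passing from ${\rm Ind}$ on the double cover to ${\rm Ind}$ on the quotient'' is misleading: the factor of two comes entirely from restricting to the anti-invariant half of $V$, not from any comparison of the two indices.)
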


The article is organized as follows. Section \ref{sec:Background} covers the relevant background. Subsection \ref{subsec:Bryant surfaces} covers the equivalent definitions for intrinsic Bryant surfaces, their correspondence to Euclidean minimal surfaces, the description of ends and framing, and some basic examples (Subsection \ref{subsec:examples}). In particular Subsection \ref{subsec:Bryant surfaces} proves all items of Theorem \ref{thm:Lawsonmirror}.  Subsection \ref{subsec:Schwarzian derivative} covers briefly the Schwarzian derivative, which is key to describe the Gauss map of Bryant surfaces, taking special care in the description around branched points. Section \ref{sec:Weighted space} introduces a weighted inner product for functions in a framed surface, which will help us to associate eigenfunctions of a weighted stability operator to the Morse index. Finally, Section \ref{sec:ProofA} deals with the proof of Theorem \ref{mainthm:index} while Section \ref{sec:ProofB} addresses Theorem \ref{mainthm:non-orientable}. These last three sections follow closely the proofs of \cite[Theorem 1.1, Theorem 1.3]{ChodoshMaximo} (and by extension work of Fischer-Colbrie \cite{Fischer-Colbrie} and Ros \cite{Ros06}), where the main difference is that we take an intrinsic approach.

\section*{Acknowledgements} D.M. would like to thank O. Chodosh for countless conversations about the index of minimal surfaces. F.V.P. is thankful to L. Ambrozio, O. Chodosh and C. Viana for their interest and helpful comments.

\section{Background}\label{sec:Background}

In the following Subsection, we will introduce four equivalent definitions for a Bryant surface (Definitions \ref{defi:intrinsic}, \ref{defi:cmc1}, \ref{defi:Bryant}, \ref{defi:Small}) and describe its basic properties in the context of each definition.

\subsection{Bryant surfaces}\label{subsec:Bryant surfaces}
Let $\Sigma$ be an immersed surface in a $3$-dimensional space form. Let us assume for simplicity that $\Sigma$ is orientable (and in consequence two-sided), passing through a double cover if needed. If one expresses the second fundamental form in isothermal coordinates as $(h_{ij})$ then the Codazzi equation is equivalent to
\begin{equation}\label{eq:codazzi}
\begin{split}
\nabla_1 h_{21} &= \nabla_2 h_{11} \\
\nabla_1 h_{22} &= \nabla_2 h_{12}.
\end{split}
\end{equation}
Observe that Equation (\ref{eq:codazzi}) is satisfied by the metric $(g_{ij})$ thanks to the compatibility of the Levi-Civita connection. Then if $\Sigma$ has constant mean curvature and $(h^0_{ij})$ denotes the traceless second fundamental form, then Equation (\ref{eq:codazzi}) is equivalent to
\begin{equation}\label{eq:CauchyRiemann}
\begin{split}
\nabla_1 h^0_{21} &= \nabla_2 h^0_{11} \\
-\nabla_1 h^0_{11} &= \nabla_2 h^0_{12},
\end{split}
\end{equation}
which are the Cauchy-Riemann equations for $h^0_{11} - ih^0_{12}$ in conformal coordinates. Hence the Codazzi equation is equivalent to the quadratic form $\sigma = (h^0_{11}-ih^0_{12})\dd z^2$ being holomorphic.

Now, if we express $(g_{ij})$ as $e^{2u}g_{0}$, where $g_{0}$ is the uniformized metric (i.e. the complete constant curvature metric conformal to $(g_{ij})$ with curvature $\pm1, 0$), and $e^{2u}$ is the conformal factor, then the Gauss equation is equivalent to 
\begin{equation}\label{eq:Gauss}
    e^{-2u}(-\Delta_{g_{0}}u-K_0) = \kappa +  (c^2 -|\sigma|^2_{g_{0}}e^{-4u}) = (c^2+\kappa) -|\sigma|^2_{g_{0}}e^{-4u},
\end{equation}
where $K_0$ is the curvature of $g_0$, $-\Delta_{g_{0}}$,  $|\cdot|_{g_{0}}$ are the laplacian and norm induced by $g_{0}$, $\kappa$ is the constant curvature of the ambient space form and $c$ is the (constant) mean curvature of $\Sigma$.

Conversely, if $\Sigma$ is a simply connected surface with uniformized metric $g_{0}$, a holomorphic quadratic differential $\sigma$ and a conformal factor $e^{2u}$ satisfying (\ref{eq:Gauss}), then by a theorem of Cartan there exists a constant mean curvature $c$ immersion $\Sigma\hookrightarrow M^3(\kappa)$ where the metric is given by $e^{2u}g_{0}$ and the second fundamental form is given by ${\rm Re}(\sigma) + cg_{0}$. Moreover, such immersion is unique up to composition with an isometry of $M^3(\kappa)$.

In particular, if $\Sigma$ is any surface (not necessarily simply connected) and $(\Sigma,g_{0},\sigma,e^{2u})$ solves (\ref{eq:Gauss}), then $(\widetilde{\Sigma},\widetilde{g_{0}},\widetilde{\sigma},e^{2\widetilde{u}})$ also solves (\ref{eq:Gauss}). Hence this last expression is realized by an immersion $e_0:\widetilde{\Sigma}\hookrightarrow M^3(\kappa)$ (up to an isometry of $M^3(\kappa)$). But given the invariance of $(\widetilde{g_{0}},\widetilde{\sigma},e^{2\widetilde{u}})$ by any deck transformation $\phi\in \pi_1(\Sigma)$, we have that there exists $\rho(\phi)\in {\rm Isom}^+M^3(\kappa)$ so that $e_0(\phi z) = \rho(\phi)e_0(z)$. This constructs a representation $\rho:\pi_1(\Sigma)\rightarrow {\rm Isom}^+M^3(\kappa)$ so that $e_0$ is $\rho$-equivariant. Conversely, any constant mean curvature $c$ immersion $\widetilde{\Sigma}\rightarrow$ that is $\rho$-equivariant for some representation $\rho:\pi_1(\Sigma)\rightarrow {\rm Isom}^+M^3(\kappa)$ gives a solution to (\ref{eq:Gauss}) on $\Sigma$.

Observe also that for any given real constant $c_0$ and parameter $\kappa<c_0$, we have a $1$-to-$1$ correspondence among $\sqrt{c_0-\kappa}$ constant mean curvature surfaces in $M^3(\kappa)$. Moreover given $(\Sigma,g_{0}, \sigma, e^{2u})$ satisfying $e^{-2u}(-\Delta_{g_{0}}u-K_0) = c_0 -|\sigma|^2_{g_{0}}e^{-4u}$ we can select isometric immersions $\widetilde{\Sigma}\hookrightarrow M^3(\kappa)$ that vary smoothly on $\kappa$. Such correspondence is known in the literature as \emph{Lawson's correspondence} \cite{Lawson70}.

Hence the following two definition are equivalent.

\begin{defi}\label{defi:intrinsic}
Let $\Sigma$ be a surface with uniformized metric $g_{0}$. We say that a function $u$ and a holomorphic quadratic differential $\sigma$ define a (intrinsic) Bryant surface if they solve the Gauss equation in $\HH$
\begin{equation}\label{eq:Gausshyp}
    e^{-2u}(-\Delta_{g_{0}}u-K_0) = -1 + (1+|\sigma|_{g_{0}}e^{-2u})(1-|\sigma|_{g_{0}}e^{-2u}) = -|\sigma|^2_{g_{0}}e^{-4u} 
\end{equation}
\end{defi}

\begin{defi}\label{defi:cmc1}
Let $\Sigma$ be a surface. We say that $\Sigma$ is a Bryant surface given that we have
\begin{enumerate}
    \item A homomorphism $\rho:\pi_1(\Sigma)\rightarrow \PSL \simeq {\rm Isom}^+\HH$.
    \item A constant mean curvature $1$ immersion $e_0:\tilde{\Sigma}\rightarrow\HH$ that is $\rho$-equivariant. More precisely, for any $\phi\in\pi_1(\Sigma)$ we have $e_0(\phi.z)=\rho(\phi).e_0(z)$.
\end{enumerate}
\end{defi}
\begin{remark}
    Observe that in Definition \ref{defi:intrinsic} $K_0$ cannot be equal to $1$. Indeed, if such where the case then $\Sigma$ would be a immersed sphere in $\mathbb{H}^3$. By taking a horosphere tangent to $\Sigma$ so that $\Sigma$ lies in its interior, we observe that this case cannot occur, as horosphere have constant mean curvature $1$.
\end{remark}
\begin{remark}
Observe that in Definition \ref{defi:cmc1} the homomorphism $\rho$ and the immersion $e_0$ are defined up to conjugation and left multiplication by an element in $\PSL \simeq {\rm Isom}^+\HH$, respectively. To keep exposition simple, we will omit this technical point and assume it as understood in future discussions.
\end{remark}

\begin{remark}\label{remark:rotation}
    Observe if $(u,\sigma)$ satisfies Definition \ref{defi:intrinsic} then for a real parameter $\theta$ the one parameter family $(u,e^{i\theta}\sigma)$ is a family of Bryant surfaces with the same intrinsic metric, and consequently the same values for principal curvatures. The change happens at the level of the principal directions foliations, which rotate by $\theta$.
\end{remark}

By the previous discussion, Bryant surfaces are in $1$-to-$1$ correspondence with intrinsic minimal surfaces in $\RR^3$, i.e. $\pi_1(\Sigma)$ equivariant minimal immersions of $\widetilde{\Sigma}$ into $\mathbb{R}^3$ acting by isometries.

On his seminal paper, Bryant \cite{Bryant87} characterizes the analogous to the Weierstrass representation of Definition \ref{defi:cmc1}. It is well-known that $\PSL \simeq {\rm Isom}^+\HH$ can be identified with the oriented frame bundle of $\HH$ and the basepoint projection map $\pi:\PSL\rightarrow\HH$ can be identified with $\pi(A) = A {}^{t}\overline{A}$. We say that a holomorphic map $F:\widetilde{\Sigma}\rightarrow\PSL$ is \emph{null} if $\det(F^{-1}\dd F)\equiv 0$. Then Bryant shows (\cite[Theorem A]{Bryant87}) that a simply connected surface $e_0:\widetilde{\Sigma}\hookrightarrow\HH$ has constant mean curvature $1$ if and only if there exists a null-immersion $F:\widetilde{\Sigma}\rightarrow \PSL$ satisfying $\pi\circ F = e_0$, and such map is unique up to right multiplication by an element of $\SU <\PSL$. Hence we have our third equivalent definition for a Bryant surface.

\begin{defi}\label{defi:Bryant}
Let $\Sigma$ be a surface. We say that $\Sigma$ is a Bryant surface given that we have
\begin{enumerate}
    \item Homomorphisms $\rho:\pi_1(\Sigma)\rightarrow \PSL$, $h:\pi_1(\Sigma)\rightarrow \SU$.
    \item A null-immersion $F:\widetilde{\Sigma}\rightarrow\PSL$ that is $(\rho,h)$-equivariant. More precisely, for any $\phi\in\pi_1(\Sigma)$ we have $F(\phi.z)=\rho(\phi).F(z).h(\phi)$.
\end{enumerate}
\end{defi}

To see that Definition \ref{defi:Bryant} follows from Definition \ref{defi:cmc1}, we have that if $F:\widetilde{\Sigma}\rightarrow\PSL$ is a null-immersion lift of the $\rho$-equivariant immersion $e_0:\widetilde{\Sigma}\hookrightarrow\HH$ and $\phi\in\pi_1(\Sigma)$ is a deck transformation, then $\rho(\phi)^{-1}F(\phi.z)$ is also a null-immersion lift of $e_0$. Hence there exists $h(\phi)\in\SU$ so that $\rho(\phi)^{-1}F(\phi.z) = F(z)h(\phi)$, and this defines a representation $h:\pi_1(\Sigma)\rightarrow\SU$ so that $F$ is $(\rho,h)$-equivariant.

Conversely, given a Bryant surface in the sense of Definition \ref{defi:Bryant} we have that $F{}^{t}\overline{F}(\phi.z) = \rho(\phi)F{}^{t}\overline{F}(z)\rho(\phi)^{-1}$, so then the immersion $e_0=\pi\circ F$ is a $\rho$-equivariant CMC-1 immersion $\widetilde{\Sigma}\hookrightarrow\HH$.

For a null-immersion $F:\widetilde{\Sigma}\rightarrow\PSL$, \cite[Theorem A]{Bryant87} says that we can write the $\PSL$ valued form $F^{-1}dF$ as
\begin{equation}
    F^{-1}dF = \begin{pmatrix} pq & -q^2\\  p^2 & -pq \end{pmatrix}\varphi,
\end{equation}
where $p,q$ are smooth function on $\widetilde{\Sigma}$ satisfying $p\overline{p}+q\overline{q}=1$ with $q/p$ meromorphic, and $\varphi$ is a $(1,0)$-form on $\widetilde{\Sigma}$. Given this notation one can also compute

\begin{enumerate}
    \item The Gauss map as $dF_1/dF_3$, by identifying $\partial_\infty\HH$ with $\overline{\CC}$.
    \item The induced metric $ds^2$ by $e_0$ from $\HH$ as $ds^2 = \varphi\overline{\varphi}$
    \item The holomorphic quadratic differential $\sigma$ defined by the traceles second fundamental form as $\sigma = 2(pdq-qdp)\varphi$
\end{enumerate}

From here we can see as well how Definition \ref{defi:intrinsic} follows from Definition \ref{defi:Bryant}, as $ds^2=e^{2u}g_{0}$ and $\sigma$ solve \ref{eq:Gausshyp}. In particular $F$ is required to be an immersion so that $ds^2$ is non-degenerate. Hence we will see that $\Sigma$ is a \emph{branched} Bryant surface if we have $(\rho,h)$-equivariant non-constant null holomorphic map $F:\widetilde{\Sigma}\rightarrow\PSL$. Hence $\widetilde{\Sigma}$ has a branched isometric immersion of constant mean curvature $1$ into $\HH$, where the branch points are isolated in $\Sigma$. Moreover, as $\varphi$ is a $(1,0)$ form and $ds^2=\varphi\overline{\varphi}$, then each branched point has an integer multiple of $2\pi$ as cone angle. Definitions \ref{defi:intrinsic} and \ref{defi:cmc1} are also revised accordingly for the branched case.

The next definition follows work of Small \cite{Small94} that explores further the Weierstrass representation for Bryant surfaces.

\begin{defi}\label{defi:Small}
Let $\Sigma$ be a surface. We say that $\Sigma$ is a Bryant surface given that we have
\begin{enumerate}
    \item Homomorphisms $\rho:\pi_1(\Sigma)\rightarrow \PSL$, $h:\pi_1(\Sigma)\rightarrow \SU$.
    \item A pair of non-constant meromorphic functions $f,g:\widetilde{\Sigma}\rightarrow\overline{\CC}$ so that $f$ is $\rho$-equivariant and $g$ is $h^{-1}$-equivariant. More precisely, for any $\phi\in\pi_1(\Sigma)$ we have $f(\phi.z)=\rho(\phi).f(z)$, $g(\phi.z)=h^{-1}(\phi).g(z)$
    \item\label{item:criticalpoints} $f$ and $g$ have the same critical points counting multiplicity, and at such points $\frac{df}{dg}$ (or changing $f, g$ by $1/f, 1/g$ depending on which one has a pole) also has a critical point of at least the common multiplicity of $f$ and $g$.
\end{enumerate}
or that $\Sigma$ is locally modeled on an horosphere, in which case the maps $f$ and $g$ are constant.
\end{defi}

\begin{remark}
    Given the conditions given in Definition \ref{defi:Small}, one can say coloquially (excluding horospheres) that Bryant surfaces correspond to conformally compatible projective and spherical structures on $\Sigma$, where $f$ and $g$ are the developing maps of each structure. We will see that $f$ will correspond to the hyperbolic Gauss map of the Bryant surface, while $g$ corresponds to what is known in the literature as the \it{secondary Gauss map} (see for instance \cite{RossmanUmeharaYamada}).
\end{remark}

To see the equivalence of Definition \ref{defi:Small} with the previous definitions we have to work a bit more. Let then $f,g:\widetilde{\Sigma}\rightarrow\overline{\CC}$ be non-constant complex analytic functions. In concordance with \cite[Section 3]{Small94} we define $\omega:\widetilde{\Sigma}\rightarrow\PSL$ as $\omega = \big(\begin{smallmatrix}
  \alpha & \beta\\
  \gamma & \delta
\end{smallmatrix}\big)$ where
\begin{equation}\label{eq:omegadefinition}
\begin{split}
    \alpha &= \left( \frac{df}{dg}\right)^{1/2} -\frac{f}2  \left(\frac{df}{dg} \right)^{-3/2} \left( \frac{d^2f}{dg^2}\right) \\
    \beta &= f\bigg\lbrace \left( \frac{df}{dg}\right)^{-1/2} + \frac{g}2\left(\frac{df}{dg} \right)^{-3/2} \left( \frac{d^2f}{dg^2}\right) \bigg\rbrace - g\left( \frac{df}{dg}\right)^{1/2}\\
    \gamma &= -\frac12 \left(\frac{df}{dg} \right)^{-3/2} \left( \frac{d^2f}{dg^2}\right)\\
    \delta &= \left( \frac{df}{dg}\right)^{-1/2} + \frac{g}2\left(\frac{df}{dg} \right)^{-3/2} \left( \frac{d^2f}{dg^2}\right)
\end{split}
\end{equation}

Denote by $C(f)$, $C(g)$ critical points (with multiplicity) of $f$ and $g$. Then one observes that while $\omega$ includes square root terms in its coordinates and it is defined away of $C(f), C(g)$, it is well-defined in all of $\Sigma\setminus(C(f) \cup C(g))$ since the image of $\omega$ is in $\PSL$. Moreover, $\omega$ extends to a point $z_0\in C(f)\cup C(g)$ if and only if satisfy condition (\ref{item:criticalpoints}) in Definition \ref{defi:Small} (see \cite[Section 3]{Small94}). Hence if $f$, $g$ have removable singularities in an punctured end of $\Sigma$ (i.e. a neighbourhood conformally equivalent to a punctured disk) and do not satisfy (\ref{item:criticalpoints}), then $\omega$ will have a singularity at that end. We refer to this type of end as \emph{regular} (contrast for instance with \cite{UmeharaYamada93}, where only $f$ is required to extend to the end).

We also define $S\lbrace f,g\rbrace$,  the \emph{Schwazian derivative} of $f$ with respect to $g$, as the $(2,0)$ meromorphic form in $\widetilde{\Sigma}$
\begin{equation}\label{eq:Schwarziandefinition}
    S\lbrace f,g\rbrace = \Bigg\lbrace\left(\frac{df}{dg} \right)^{-1} \left( \frac{d^3f}{dg^3}\right) - \frac32 \left(\frac{df}{dg} \right)^{-2} \left( \frac{d^2f}{dg^2}\right)\Bigg\rbrace dg^2.
\end{equation}
The Schwarzian derivative satisfy the following properties
\begin{enumerate}
    \item $S\lbrace g,f\rbrace = -S\lbrace f,g\rbrace$
    \item For any $\theta\in\PSL$ we have that $S\lbrace \theta\circ f,g\rbrace = S\lbrace f,\theta\circ g\rbrace = S\lbrace f,g\rbrace$.
    \item $S\lbrace f,g\rbrace\equiv 0$ if and only if there exists $\theta\in\PSL$ so that $f=\theta\circ g$.
\end{enumerate}
In particular, if $f,g:\widetilde{\Sigma}\rightarrow\overline{\CC}$ are equivariant by maps in $\PSL$ then $S\lbrace f,g\rbrace$ is well-defined as a $(2,0)$ meromorphic form in $\Sigma$. And since $f$ and $g$ have the same critical points counting multiplicity, then $S\lbrace f,g\rbrace$ is in fact holomorphic in $\widetilde{\Sigma}$. Moreover, by the discussion of regular ends we have that at those points $S\lbrace f,g\rbrace$ has zeros of at least the same order as the multiplicity of the critical points of $g$.

It is a straightforward calculation to find that
\begin{equation}
    d\omega = -\frac12 \begin{pmatrix} f & -fg\\  1 & -g \end{pmatrix} \left( \frac{df}{dg}\right)^{-1/2}S\lbrace f,g\rbrace dg^{-1}
\end{equation}
From this formula and the properties of the Schwarzian derivative, we see that $\omega$ is constant if and only if there exists $\theta\in\PSL$ so that $f=\theta\circ g$.

We can also easily find
\begin{equation}
    \omega^{-1}d\omega = -\frac12 \begin{pmatrix} g & -g^2\\  1 & -g \end{pmatrix} S\lbrace f,g\rbrace dg^{-1},
\end{equation}
from where we can easily see that $\omega:\widetilde{\Sigma}\rightarrow\PSL$ is a null holomorphic map. For the map to $\mathbb{H}^3$ defined by the projection of $\omega$ we can compute

\begin{enumerate}
    \item The Gauss map as $f$
    \item The induced metric $ds^2$ as $ds^2 = \frac14(1+|g|^2)^2|S\lbrace f,g \rbrace|^2|dg|^{-2}$
    \item The holomorphic quadratic differential $\sigma$ as $\sigma = -S\lbrace f,g\rbrace$
\end{enumerate}
With all these details explained, we are ready to prove that Definition \ref{defi:Small} is equivalent to Definitions \ref{defi:intrinsic}, \ref{defi:cmc1} and \ref{defi:Bryant}.

Given Definition \ref{defi:Small} we have that since $f,g:\widetilde{\Sigma}\rightarrow\overline{\CC}$ are $\rho$ and $h^{-1}$ equivariant respectively, then $\sigma = -S\lbrace f,g\rbrace$ is a well-defined quadratic holomorphic differential in $\Sigma$. Moreover, since $h$ has image in $\SU$ then $g^*(g_{\mathbb{S}^2}) = \frac{4|dg|^2}{(1+|g|^2)^2}$ is a well-defined branched metric in $\Sigma$, and because the Schwarzian vanishes at least at the same order as the multiplicity of the critical points of $g$ then the metric $ds^2 = \frac14(1+|g|^2)^2|S\lbrace f,g \rbrace|^2|dg|^{-2}$ is also a well-defined branched metric in $\Sigma$. Since $\omega$ is a null holomorphic map then $ds^2=e^{2u}g_{0}$ and $\sigma$ define a Bryant surface in the sense of Definition \ref{defi:intrinsic}. By the equivalence with Definition \ref{defi:Bryant}, it follows that there exists homomorphisms $\rho_1:\pi_1(\Sigma)\rightarrow \PSL$, $h_1:\pi_1(\Sigma)\rightarrow \SU$ so that $\omega$ is $(\rho_1,h_1)$ equivariant. As the Gauss map of $\omega$ is given by $f$, then it follows that $f$ is both $\rho$ and $\rho_1$ equivariant, and subsequently $\rho=\rho_1$. Similarly, the $\PSL$ valued form $\omega^{-1}d\omega$ is both $h$ and $h_1$ equivariant, so it follows that $h=h_1$.

Conversely, let us start from Definition \ref{defi:Bryant}. We define $g$ as the meromorphic function $q/p$. We will assume that $g=q/p$ is not constant, as otherwise we will have that $\sigma=2(pdq-qdp)\varphi\equiv0$ and consequently that $\Sigma$ is umbilic, hence it is locally isometric to an horosphere. Since $F^{-1}dF$ is $h$-equivariant, then it follows that $g$ is $h$-equivariant. Then we define $f:\widetilde{\Sigma}\rightarrow\overline{\CC}$ as the solution of $S\lbrace f,g\rbrace = -\sigma$. This solution will satisfy condition (\ref{item:criticalpoints}). Since $\sigma$ is defined in $\Sigma$, it follows that $f$ is $\rho_1$ equivariant for some homomorphism $\rho_1:\pi_1(\Sigma)\rightarrow\PSL$. Hence the null-immersion $\omega=\omega(f,g)$ satisfies $\omega^{-1}d\omega=F^{-1}dF$. As then $\omega$ and $F$ define the same intrinsic Bryant surface of Definition \ref{defi:intrinsic}, it follows that there exists $\theta\in\PSL$ so that $\omega=\theta F$. As the Gauss map of $\omega$ is $\rho_1$ equivariant and the Gauss map of $F$ is $\rho$-equivariant, it follows that $\rho=\rho_1$.

Observe that for a branched Bryant surface in the sense of Definition \ref{defi:Small} branched points are characterized by zeros of $\sigma=-S\lbrace f,g\rbrace$ of higher multiplicity than the zeros of $dg$. Hence a surface in the sense of Definition \ref{defi:Small} is immersed if and only if $\sigma$ and $dg$ have the same zeros counting multiplicities.

As we have not only proven equivalence of definitions but concordance on notation, from now on we will use interchangeably either definition and we will use common notation while referring to the metric $ds^2$, holomorphic quadratic differential $\sigma$ and homomorphisms $\rho:\pi_1(\Sigma)\rightarrow \PSL$, $h:\pi_1(\Sigma)\rightarrow \SU$.

In general, given a meromorphic map $g:\widetilde{\Sigma}\rightarrow\overline{\CC}$ equivariant by some representation of $\pi_1(\Sigma)$ into $\SU$ and a holomorphic quadratic differential $\sigma$ in $\Sigma$, the solution of $S\lbrace f,g\rbrace = -\sigma$ is given by a meromorphic $\PSL$ equivariant function $f:\widetilde{\Sigma}\rightarrow\overline{\CC}$ satisfying that it has the same critical points as $g$, counting multiplicities. If at any of those points $\frac{df}{dg}$ does not have the same multiplicity as $g$ then the null-immersion $\omega(f,g)$ has a pole singularity at such point. Hence this construction gives an equivariant Bryant surface with regular ends. Hence we will say that a regular end of a Bryant surface is \emph{type-1} if $S\lbrace f,g\rbrace$ extends holomorphically

More generally, we would like to describe which meromorphic quadratic differential $\sigma$ can appear as $S\lbrace f,g\rbrace$ for a regular end. Such differentials have poles of order at most $2$, and if we consider coordinates so that $g$ is expressed as $g(z)=z^n$ then $\sigma$ must have an expansion $\left(\frac{k^2-n^2}{2z^2} +\frac{a}{z} +h.o.t.\right)dz^2$ for some integer $k\geq 2$. Here we have a regular end (as $\omega(f,g)$ has a pole singularity) so that $S\lbrace f,g\rbrace$ has a pole. Hence we will say that a regular end of a Bryant surface is \emph{type-2} if $S\lbrace f,g\rbrace$ has a pole.

\begin{remark}
As we are interested in surfaces of finite-type and since Definition \ref{defi:Small} already encodes ends through the use of meromorphic functions and forms, from now on we will denote by $\overline{\Sigma}$ a compact Bryant surface (possibly branched) with ends points, and we will denote by $\Sigma$ to the surface obtained by removing the ends. 
\end{remark}

As a Corollary of this discussion, we have the following characterizations.

\begin{cor}\label{cor:Bryantproperties}
Consider $\Sigma$ a Bryant surface with no branched points. Then
\begin{enumerate}
    \item $\Sigma$ is immersed in $\mathbb{H}^3$ if and only if $\rho$ is trivial.
    \item If $Im(\rho)$ is a discrete subgroup of $\PSL$ then $\Sigma$ is immersed in the hyperbolic manifold $\HH/Im(\rho)$. Moreover, if $\rho$ is faithful then $\Sigma$ is incompressible.
    \item Assume for this item that $\Sigma$ could be branched. The $\PSL$ valued form $F^{-1}dF$ is well-defined on $\Sigma$ if and only if $h$ is trivial (or equivalently, if $g$ is well-defined on $\Sigma$). We will distinguish this class of Bryant surfaces in the next definition.
\end{enumerate}
\end{cor}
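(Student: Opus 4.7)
For item (1), I would invoke Definition \ref{defi:cmc1} and argue that the $\rho$-equivariant immersion $e_0:\widetilde{\Sigma}\to\HH$ descends to an immersion of $\Sigma$ into $\HH$ if and only if $\rho$ is trivial. One direction is immediate: trivial $\rho$ makes $e_0$ literally $\pi_1(\Sigma)$-invariant, so it factors through $\Sigma$ as an immersion since $\widetilde{\Sigma}\to\Sigma$ is a local diffeomorphism. For the converse, if a realizing immersion $\iota:\Sigma\hookrightarrow\HH$ exists, its lift to $\widetilde{\Sigma}$ is $\pi_1(\Sigma)$-invariant and realizes the same Bryant data as $e_0$, so by the Cartan uniqueness reviewed in Subsection \ref{subsec:Bryant surfaces} it must equal $A\circ e_0$ for some $A\in\mathrm{Isom}^+\HH$; matching the equivariances then forces $A\rho(\phi)A^{-1}=\mathrm{id}$ for every $\phi$, i.e.\ $\rho$ is trivial.

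For item (2), I would push $e_0$ through the quotient map $q:\HH\to\HH/\mathrm{Im}(\rho)$: this quotient is a hyperbolic (orbi)fold since $\mathrm{Im}(\rho)$ is discrete, and the $\rho$-equivariance of $e_0$ makes $q\circ e_0$ invariant under $\pi_1(\Sigma)$, so it factors as an immersion $\Sigma\to\HH/\mathrm{Im}(\rho)$. For incompressibility I would identify the homomorphism $\pi_1(\Sigma)\to\pi_1(\HH/\mathrm{Im}(\rho))\simeq\mathrm{Im}(\rho)$ induced by this immersion as $\rho$ itself; faithfulness of $\rho$ is then exactly $\pi_1$-injectivity of the immersion, i.e.\ incompressibility.

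For item (3), I would differentiate the equivariance $F(\phi z)=\rho(\phi)F(z)h(\phi)$ of Definition \ref{defi:Bryant} to obtain
\[\phi^{\ast}(F^{-1}dF)=h(\phi)^{-1}(F^{-1}dF)\,h(\phi),\]
so the Maurer--Cartan form descends to $\Sigma$ if and only if $h(\phi)$ commutes with $F^{-1}dF(z)$ at every $z$ and every $\phi$. The cleanest way to close this is via Definition \ref{defi:Small}: since $g=q/p$ is $h^{-1}$-equivariant, $g$ descends to $\Sigma$ precisely when each $h(\phi)^{-1}\in\SU$ fixes every value of $g$, and since $g$ is non-constant (the horosphere case, in which $F^{-1}dF\equiv 0$ trivially descends and $g$ is constant, is already excluded), this forces $h^{-1}$, and hence $h$, to be trivial. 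The commutator computation is the only delicate ingredient in the whole corollary, and I expect it to be the main obstacle, but via the $g$-description it reduces to the elementary fact that a non-trivial element of $\SU$ fixes at most two points of $\overline{\CC}$.
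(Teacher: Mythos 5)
Your proposal is correct and follows essentially the route the paper intends: the corollary is stated there without a separate proof, as an immediate consequence of the equivariance discussion ($e_0$ is $\rho$-equivariant, $F$ is $(\rho,h)$-equivariant, $g$ is $h^{-1}$-equivariant) that you reproduce. The only steps worth writing out explicitly are (i) in item (1), that an orientation-preserving isometry of $\HH$ fixing the image of a CMC-1 immersion pointwise must be the identity (otherwise its fixed-point set would be totally geodesic), which is what upgrades ``matching the equivariances'' to $\rho(\phi)=\mathrm{id}$, and (ii) in item (3), that the kernel/image line of the rank-one nilpotent $F^{-1}dF$ is exactly $[q:p]$, i.e.\ the point $g(z)\in\overline{\CC}$, which is the bridge from ``$h(\phi)$ commutes with $F^{-1}dF$'' to ``$h(\phi)^{-1}$ fixes every value of $g$'' before you invoke that a non-trivial element of $\SU$ fixes at most two points of $\overline{\CC}$.
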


\begin{defi}\label{defi:framed}
    We say that a (branched) Bryant surface is \emph{framed} if the $\PSL$ valued form $F^{-1}dF$ is well-defined on $\Sigma$.
\end{defi}

\subsubsection{Examples}\label{subsec:examples}

As a simple example, any horosphere can be conjugated so that $F = \begin{pmatrix} 1 & z\\  0 & 1 \end{pmatrix}$. In this case the form $F^{-1}dF$ is equal to $\begin{pmatrix} 0 & dz\\  0 & 0 \end{pmatrix}$, which one can easily verify that is invariant by the isometries preserving the horosphere. Hence any Bryant surface modeled locally by an horosphere is framed.

In \cite[Example 2]{Bryant87} Bryant gives a family of Bryant surfaces called the catenoid cousins, which in fact via Lawson's correspondence they match a family of Euclidean catenoids. The family of maps $F$ for $\overline{\Sigma} = \overline{\mathbb{C}}, \Sigma=\mathbb{C}^*$ is given by
\[ F(z) = \frac{1}{\sqrt{2\mu+1}} \begin{pmatrix} (\mu+1)z^\mu & \mu z^{-(\mu+1)}\\  \mu z^{\mu+1} & (\mu+1)z^{-\mu}\end{pmatrix},
\]
where $\mu\neq0$ is a real parameter satisfying $\mu>-\frac12$. One can compute
\[ F^{-1}dF(z) = \frac{\mu(\mu+1)}{2\mu+1} \begin{pmatrix} z^{-1} & -z^{-2\mu-2}\\  z^{2\mu} & -z^{-1}\end{pmatrix}dz.
\]
Hence a catenoid cousins is framed if and only if $2\mu$ is a positive integer.

Families of examples of Bryant surfaces have been constructed in the literature through \emph{duality} (see for instance \cite{UmeharaYamada96}, \cite{RossmanUmeharaYamadaIrreducible}). In the context of Definition \ref{defi:Small}, duality involves interchanging the Gauss maps $f$ and $g$ (or equivalently, by taking the inverse of the null map $F=\omega$). In order to obtain a dual surface that is also equivariant, one needs $f$ (or equivalently, the surface itself) to be equivariant through a representation of $\pi_1(\Sigma)$ into $\SU$. Hence by the first item of Corollary \ref{cor:Bryantproperties} we see that a $\SU$ equivariant surface $\Sigma_1$ is immersed if and only if its dual $\Sigma_2$ is framed.

We can use that in order to be framed we can equivalently verify that $g$ is well-defined in $\Sigma$, which is useful if the Bryant-Weierstrass data of the surface is given explicitly. For instance, in \cite{UmeharaYamada93} there are families of examples for which we can easily determine when they are framed. In \cite[Example 7.2]{UmeharaYamada93} we have $g(z)=\left(\frac{z-1}{z+1} \right)^\mu \left(\frac{z-\mu}{z+\mu}\right)$ defined on $\mathbb{C}\setminus\lbrace -1, 1\rbrace$ with $\mu\in \mathbb{R}\setminus\lbrace \pm1, 0\rbrace$. In this case the surface is framed if and only if $\mu\in \mathbb{Z}\setminus\lbrace\pm1,0 \rbrace$. Similarly for \cite[Example 7.3]{UmeharaYamada93} we have $g(z) = z^\mu \frac{z^m+a}{az^m+1}$ on $\mathbb{C}\setminus\lbrace0,\zeta,\zeta^2,\ldots,\zeta^{m}\rbrace$, where $m$ is an integer, $\zeta$ a $m$-root of $-1$, $\mu \in \mathbb{R}\setminus \lbrace0, \pm1, \pm m\rbrace$ and $a=\frac{\mu+m}{\mu-m}$. Hence these surfaces are framed if and only if $\mu\in\mathbb{Z}\setminus \lbrace0, \pm1, \pm m\rbrace$.

One can use Corollary \ref{cor:Bryantproperties} to produce general families of examples of Bryant surface in hyperbolic manifolds. Consider a projective surface $\Sigma$, that is, a surface with local charts into $\overline{\CC}$ so that the change of charts are restriction of maps in $\PSL$. Take $f:\widetilde{\Sigma}\rightarrow\overline{\CC}$ to be the developing map of this projective structure, which is $\rho$-equivariant for some homomorphism $\rho:\pi_1(\Sigma)\rightarrow\PSL$. Then for any $g:\Sigma\rightarrow\overline{\CC}$ we have that $\omega=\omega(f,g)$ defines a framed branched Bryant surface.

Assume that the projective structure of $\Sigma$ comes from an identification of $\widetilde{\Sigma}$ with an open set of $U\subset\CC$ so that $\pi_1(\Sigma)$ acts faithfully and properly discontinuous in $U$ by restriction of elements of $\PSL$. Following \cite{Epstein84} $\widetilde{\Sigma}$ is in 1-to-1 correspondence with (singular) $\pi_1(\Sigma)$ invariant metrics in $U$ so that its principal curvatures at infinity $k^*_{1,2}$ satisfy $k^*_1+k^*_2 + 2k^*_1k^*_2=0$. This is because the principal curvatures of $\widetilde{\Sigma}$ can be computed as $\frac{1-k^*_1}{1+k^*_1}$, $\frac{1-k^*_2}{1+k^*_2}$.

If $\Sigma$ is a component of the conformal boundary at infinity of a hyperbolic $3$-manifold $M$, then the Bryant surface defined by $\omega$ is a framed Bryant surface in $M$ parallel to the boundary component corresponding to $\Sigma$. As by the main results of \cite{Kahn-Markovic}, \cite{KahnWright} complete finite-volume hyperbolic manifolds contain infinitely many quasi-Fuchsian subgroups, and as each of those subgroups has associated two projective structures at infinity, then it follows that $M$ contains infinitely many finite-type, $\pi_1$-injective, branched, framed Bryant surfaces.  

In order to continue with the demostration of Theorem \ref{thm:Lawsonmirror}, let us now describe how the $1$-to-$1$ correspondence between Euclidean minimal surfaces and Bryant surfaces looks like under Definition \ref{defi:Bryant} and Definition \ref{defi:Small}. For an intrinsic minimal surface $\Sigma$ in $\RR^3$, the Weierstrass representation can be expressed in terms of a meromorphic function $g:\widetilde{\Sigma}\rightarrow\overline{\CC}$ equivariant with respect to a representation $h:\pi_1(\Sigma)\rightarrow\SU$ and a holomorphic $1$-form $\eta$ in $\widetilde{\Sigma}$ so that $\eta\dd g$ defines an $\pi_1(\Sigma)$ invariant $2$-form in $\Sigma$. Then one can construct a branched minimal immersion $x:\widetilde{\Sigma}\rightarrow\RR^3$ so that $g$ is the Gauss map by taking
\begin{equation}\label{eq:minimalimmersion}
    x(z) := {\rm Re}\left(\int_{z_0}^z\frac14(1+g^2)\eta, \int_{z_0}^z \frac{i}4(1-g^2)\eta, \int_{z_0}^z \frac12g\eta\right)
\end{equation}
Such immersion is equivariant by some representation of $\pi_1(\Sigma)$ into ${\rm Isom}^+(\RR^3)$. For this immersion, beyond computing $g$ as the Gauss map, we can compute as well

\begin{enumerate}
    \item The induced metric $ds^2$ as $ds^2 = \frac14(1+|g|^2)^2|\eta|^2$
    \item The holomorphic quadratic differential $\sigma$ so that ${\rm Re}\sigma$ is the second fundamental form as $\sigma = \eta\dd g$.
\end{enumerate}
Observe that the metric $ds^2 = \frac14(1+|g|^2)^2|\eta|^2 = \frac14 \frac{(1+|g|^2)^2}{|dg|^2}|\eta d g|^2$ is a well-defined metric in $\Sigma$ since $g$ is equivariant by a $\SU$ representation and $\eta d g$ defines a $2$-form in $\Sigma$.

Then we can realize $ds^2,\sigma$ as the metric and traceless second fundamental form of $\Sigma$ as a Bryant surface by solving $f:\widetilde{\Sigma}\rightarrow\overline{\CC}$ so that $S\lbrace f,g\rbrace = -\sigma$. Again, as $\sigma$ is a well-defined holomorphic for in $\Sigma$ then $f$ is $\rho$-equivariant for some representation $\rho:\pi_1(\Sigma)\rightarrow\PSL$.

Conversely, given a Bryant surface $\Sigma$ in the sense of Definition \ref{defi:Small} we can realize $\Sigma$ as a intrinsice minimal surface in $\RR^3$ by taking $g$ and $\eta=-S\lbrace f,g\rbrace( d g)^{-1}$ as its Weierstrass data.

In particular, a Bryant surface is framed if and only if the branched immersion (\ref{eq:minimalimmersion}) is equivariant by a representation of $\pi_1(\Sigma)$ into translations of $\RR^3$, which can be identified with $\RR^3$ itself.

Another way to understand framed Bryant/minimal surfaces is as follows. Bryant \cite[Equation 1.9]{Bryant87} describes the $\PSL$ valued form $F^{-1} d F$ as

\begin{equation}\label{eq:harmonicformsPSL2C}
F^{-1} d F = F^*\begin{pmatrix} (w^3+iw^2_1) & (w^1-w^1_3) + i(w^2-w^2_3)\\  (w^1+w^1_3) - i(w^2+w^2_3) & -(w^3+iw^2_1) \end{pmatrix}
\end{equation}
where $(w^i_j)$ are the connection forms of $\PSL$ as a submanifold of $(\mathbb{L}^4)^4$, where $\mathbb{L}^4$ is the $4$-dimensional Minkowski space-time. The expression 

\[
\begin{pmatrix} (w^3+iw^2_1) & (w^1-w^1_3) + i(w^2-w^2_3)\\  (w^1+w^1_3) - i(w^2+w^2_3) & -(w^3+iw^2_1) \end{pmatrix}
\]
defines a $\mathfrak{sl}_2(\CC)$ valued form in $\PSL$, which is invariant by left multiplication. In particular, $\Sigma$ is framed if and only if the harmonic forms $w^1, w^2, w^3$ (and subsequently their harmonic duals $-w^2_3,w^1_3,w^2_1$) are well-defined in $\Sigma$. As $\omega=F$, we can compute
\[
\begin{split}
w^1 &= -\frac14 {\rm Re}\left( (1-g^2)S\lbrace f,g\rbrace d g^{-1}\right),\\
w^2 &= -\frac{i}{4}{\rm Re}\left( (1+g^2)S\lbrace f,g\rbrace d g ^{-1}\right),\\
w^3 &= -\frac12 {\rm Re}\left(gS\lbrace f,g\rbrace dg^{-1}\right).
\end{split}
\]
By (\ref{eq:minimalimmersion}) then $w^1=dx_1, w^2=dx_2, w^3=dx_3$. Hence a Bryant/minimal surface $\Sigma$ is framed if and only if these three harmonic forms are well-defined on $\Sigma$, which is equivalent to the associated $\mathfrak{sl}_2(\CC)$ or $\CC^3$ valued forms to be well-defined on $\Sigma$.

Finally, let us discuss how two-sidedness translate under Lawson's correspondence for Bryant and minimal surfaces. If a Euclidean minimal surface is two-sided then the Gauss map $g$ is well-defined as an equivariant map along $\widetilde{\Sigma}$, while the quadratic holomorphic differential $S\lbrace f,g\rbrace$ is well-defined in $\Sigma$. Hence by solving the Schwarzian equation the map $f$ is well-defined as an equivariant map in $\widetilde{\Sigma}$, which implies that the surface is two-sided as a Bryant surface. Similarly, if we have a two-sided Bryant surface then now $f$ is well-defined equivariantly along $\widetilde{\Sigma}$ and $S\lbrace g, f\rbrace = -S\lbrace f,g \rbrace$ is well-defined in $\Sigma$, so we can check that $g$ is well-defined by solving the Schwarzian equation.

\subsection{Schwarzian derivative}\label{subsec:Schwarzian derivative}

Let $\Sigma$ be a Riemann surface, $g:\widetilde{\Sigma}\rightarrow\overline{\CC}$ be a $\PSL$ equivariant meromorphic function and let $\sigma$ be a holomorphic quadratic differential. For a given point $z_0\in \widetilde{\Sigma}$ lets take local chart identifying $z_0$ with $0$ so that $g(z)=g(z_0) + z^n$ (assuming without loss of generality that $z_0$ is not a pole). If in this chart $\sigma=\sigma(z)dz^2$, then we are interested in solving near $0$ the equation
\begin{equation}\label{eq:Sfg1}
    S\lbrace f, g(z_0)+z^n \rbrace = S\lbrace f, z^n \rbrace = -\sigma(z)dz^2,
\end{equation}
where the first equality follows because translations lie in $\PSL$ and the Schwarzian stays the same by post-composition of $g$ with elements of $\PSL$.

It is a simple calculation to find that
\begin{equation}
    S\lbrace f, z^n \rbrace = S\lbrace f, z \rbrace + \frac{n^2-1}{2z^2}dz^2,
\end{equation}
so (\ref{eq:Sfg1}) is equivalent to
\begin{equation}\label{eq:Sfg2}
    S\lbrace f, z \rbrace = \left(-\sigma(z) - \frac{n^2-1}{2z^2} \right)dz^2,
\end{equation}

Solutions to the Schwarzian equation $S\lbrace f, z \rbrace = q(z)dz^2$, where $q$ is holomorphic at $0$ can be find as follows. One considers the linear differential equation
\begin{equation}\label{eq:linearPDE}
p''+ \frac{q}{2}p = 0
\end{equation}
As the system is linear one can consider two linearly independent solution $p_1,p_2$. Then we can solve $S\lbrace f, z \rbrace = q(z)dz^2$ by taking $f=\frac{p_1}{p_2}$.

In general if we consider $q$ with a pole at $0$, the solutions $p_1, p_2$ and the quotient $f$ are now multivalued functions in the punctured disk. This is because we solve the problem $S\lbrace f, z \rbrace = q(z)dz^2$ in the strip covering of the punctured disk, but the solution does not need to be invariant by deck transformations. Nevertheless, we will see how if $q$ has a pole of at most $2$ with $z^{-2}$ coefficient equal to $\frac{n^2-1}{2}$ for some positive integer $n$, then even though the solutions $p_1, p_2$ of (\ref{eq:linearPDE}) are multivalued, the quotient $f$ is well-defined around $0$. The motivation for what follows is that in the case when $q(z)=-\frac{n^2-1}{2}$ then $p_1(z)=z^{\frac{n+1}{2}}, p_2(z)=z^{\frac{-n+1}{2}}$ are the multivalued solutions to (\ref{eq:linearPDE}) while $f(z)=p_1(z)/p_2(z)=z^n$ is well-defined. 

Consider then the equation
\begin{equation}\label{eq:linearPDE1}
p''+ \left(-\frac{\sigma(z)}{2} - \frac{n^2-1}{4z^2} \right)p = 0
\end{equation}

Let us construct a solution $p_1$ given by

\[p_1(z) = z^{\frac{n+1}{2}}\left(1+\sum_{j\geq1}a_jz^j\right) = z^{\frac{n+1}{2}} + \sum_{j\geq1}a_jz^{\frac{n+1}{2}+j}
\]

Hence
\begin{equation}\label{eq:p1''term}
    p''_1(z) = \frac{1}{z^2}\left(\frac{n^2-1}{4}z^{\frac{n+1}{2}} + \sum_{j\geq1}a_j\left(\frac{n+1}{2}+j\right)\left(\frac{n-1}{2}+j\right)z^{\frac{n+1}{2}+j}\right)
\end{equation}
while
\begin{equation}\label{eq:qp1term}
    p_1(z)\left(- \frac{n^2-1}{4z^2} -\frac{\sigma(z)}{2} \right) = \frac{1}{z^2} \left(z^{\frac{n+1}{2}} + \sum_{j\geq1}a_jz^{\frac{n+1}{2}+j}\right) \left( - \frac{n^2-1}{4} -\frac{\sigma(z)z^2}{2}\right).
\end{equation}
Then equation (\ref{eq:linearPDE1}) determines the coefficients $a_j$ recursively.

Similarly we can find a solution $p_2$ with an expansion
\[p_2(z) = z^{\frac{-n+1}{2}}\left(1+\sum_{j\geq1}b_jz^j\right) = z^{\frac{n+1}{2}} + \sum_{j\geq1}b_jz^{\frac{n+1}{2}+j}
\]

Then the quotient

\[f(z) = p_1(z)/p_2(z) = z^n\frac{(1+\sum_{j\geq1}a_jz^j)}{(1+\sum_{j\geq1}b_jz^j)}
\]
is a well-defined holomorphic function at the origin that satisfies (\ref{eq:Sfg2}). As we can see, if $0$ was a critical point of $g$ then it is a critical point for $f$ with the same multiplicity.

Observe that if $\sigma$ has a $0$ of order $n$ at $0$, then the recursion that finds $p_1,p_2$ shows that $a_j=b_j=0$ for any $1\leq j\leq n$. Hence in this case we have that $f(z) = z^n(1+z^n\zeta(z))$ for some $\zeta$ holomorphic at $0$. Then $\frac{df}{dg} = 1+ z^n\left( 2\zeta(z) + \frac{z}{n}\zeta'(z)\right)$ has also a critical point of at least the same common multiplicity of $f$ and $g$.

Finally, if $\sigma=\sigma(z)dz^2$ has at most a pole of order $2$ at the origin and $\sigma(z)$ can be expressed as
\[\sigma = \frac{k^2-n^2}{2z^2} + h.o.t.
\]
for some positive integer $k$, then (\ref{eq:Sfg2}) becomes
\[S\lbrace f,g\rbrace = \left(-\frac{k^2-1}{2z^2}+h.o.t \right)dz^2
\]
which has a well-defined holomorphic solution $f$ near the origin of multiplicity $k$.

\section{Weighted space}\label{sec:Weighted space}

From now on we refer to $\Sigma$ as a framed surface, meaning either framed Euclidean minimal or framed Bryant. We will also assume from now that besides being framed, $\Sigma$ is finite-type. At each end we will also fix a complex chart identifying the end with the origin, and we say that the end has order $m$ if $|z|^{2m}ds^2$ extends to a complete metric at the origin. We will fix a positive smooth function $u$ in $\Sigma$ so that at an end of order $m$ we have that $u(z)=\frac{|z|^{m-1}}{(m-1)\log|z|^{-1}}$ ($u(z)=(\log|z|^{-1})^{-1}(\log(\log|z|^{-1}))^{-1}$ if $m=1$), and we define $L^2_{*}(\Sigma)$ as the space of real valued functions in $\Sigma$ by taking the completion of compactly supported functions on $\Sigma$ with respect to the inner product

\[ \langle f, g \rangle_{L^2_{*}(\Sigma)} = \int_{\Sigma} u^2fgda
\]

Observe that since the metric $ds^2$ and Gaussian curvarture $K$ are given by $ds^2 = \frac14(1+|g|^2)^2|S\lbrace f,g \rbrace|^2|dg|^{-2}$ and $K=-4(1+|g|^2)^{-4}|S\lbrace f,g \rbrace|^{-2}|dg|^{4}$ we have that $|K|\leq C|z|^{2m+\ell}$ for some constant $C>0$, where $\ell\geq0$ is the order of $dg$ at the end. In particular we have that $|K|\leq Cu^2$.

The (weighted) eigenfunction equation with respect to $\langle.,.\rangle_{L^2_{*}(\Sigma)}$ of the quadratic form $Q(f,f)=\int_{\Sigma} |\nabla f|^2 + 2Kf^2 da$ is given by
\begin{equation}
    \Delta f - 2Kf + \lambda u^{2}f = 0
\end{equation}

Following \cite[Proposition 8]{ChodoshMaximo} we state

\begin{prop}\label{prop:weightedspace}
Suppose $\Sigma$ has index $k$. There exists a $k$-dimensional subspace $W$ in $L^2_{*}(\Sigma)$ with a $L^2_{*}(\Sigma)$ orthonormal basis of eigenfunctions $f_1,\ldots,f_k$. The associated eigenvalues $\lambda_1,\ldots,\lambda_k$ are all negative, and for any $\phi$ in $C^\infty_0(\Sigma)\cap W^\perp$ (where $W^\perp$ denotes the $L^2_{*}(\Sigma)$ orthogonal complement of $W$) we have that $Q(\phi,\phi)\geq0$.
\end{prop}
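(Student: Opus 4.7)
The plan is to follow \cite[Proposition 8]{ChodoshMaximo} by a standard exhaustion/Rayleigh-quotient argument adapted to the weighted inner product, with the weight $u$ designed precisely so that the bilinear form $Q$ becomes semibounded on $L^2_{*}(\Sigma)$. First I would fix a compact exhaustion $\Omega_1 \subset \Omega_2 \subset \cdots \subset \Sigma$ with smooth boundary, and on each $\Omega_j$ consider the weighted Dirichlet eigenvalue problem
\begin{equation*}
-\Delta f + 2Kf \;=\; \lambda\, u^2 f \quad\text{in } \Omega_j, \qquad f|_{\partial \Omega_j}=0.
\end{equation*}
Standard Rellich-type compactness on bounded domains yields a discrete spectrum $\lambda_1^{(j)} \leq \lambda_2^{(j)} \leq \cdots \to \infty$ with eigenfunctions $f_i^{(j)}$ that are orthonormal in $L^2_{*}(\Omega_j)$. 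Crucially, since $|K|\leq C u^2$ (observed just before the statement), one has
\begin{equation*}
Q(f,f) \;\geq\; -2C\,\|f\|_{L^2_{*}(\Sigma)}^2,
\end{equation*}
so every $\lambda_i^{(j)} \geq -2C$, and by domain monotonicity each $\lambda_i^{(j)}$ is non-increasing in $j$; hence a limit $\lambda_i := \lim_j \lambda_i^{(j)} \in [-2C, \infty)$ exists.

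Next I would extract limit eigenfunctions. Since $\Sigma$ has Morse index $k$, there is a fixed $k$-dimensional subspace $V_0 \subset C_0^\infty(\Sigma)$ on which $Q$ is negative definite; for $j$ large enough $V_0 \subset C_0^\infty(\Omega_j)$, and the min-max characterization forces $\lambda_1^{(j)},\ldots,\lambda_k^{(j)}$ to be bounded above by a negative constant (independent of $j$), while $\lambda_{k+1}^{(j)} \geq 0$ because otherwise one could produce a $(k+1)$-dimensional subspace of $C_0^\infty(\Sigma)$ on which $Q<0$. Using interior elliptic estimates for $-\Delta + 2K$ on compact subsets of $\Sigma$ and a diagonal extraction argument, I pass to a subsequence so that for each $i \leq k$ the eigenfunctions $f_i^{(j)}$ converge in $C^2_{\mathrm{loc}}(\Sigma)$ to a smooth function $f_i$ solving $\Delta f_i - 2K f_i + \lambda_i u^2 f_i = 0$. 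Orthonormality in $L^2_*$ passes to the limit on any fixed compact set.

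The main obstacle, as usual with eigenfunctions on noncompact domains, is to show no $L^2_*$-mass escapes to the ends, so that $\|f_i\|_{L^2_*(\Sigma)} = 1$ and $\langle f_i, f_j\rangle_{L^2_*(\Sigma)} = \delta_{ij}$. Here I would use that $\lambda_i < 0$ together with the specific decay profile of $u$ at each end: the pointwise bound $|K| \leq C u^2$ combined with a weighted cutoff argument applied to the eigenfunction equation gives an exponential-type decay of the mass of $f_i^{(j)}$ outside large balls uniformly in $j$, ruling out concentration at the ends. This is exactly where the definition of $u$ at an end of order $m$ (namely $u(z) = |z|^{m-1}/((m-1)\log|z|^{-1})$, and the analogous logarithmic choice for $m=1$) is used: it makes $\int u^2\, da$ finite near each end but degenerates just slowly enough that functions with $Q$-controlled energy cannot hide all their weighted $L^2$ mass there.

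Finally, set $W = \mathrm{span}\{f_1,\ldots,f_k\} \subset L^2_{*}(\Sigma)$. By construction $W$ is $k$-dimensional with $\lambda_1,\ldots,\lambda_k<0$. To show $Q(\phi,\phi)\geq 0$ for every $\phi\in C_0^\infty(\Sigma) \cap W^\perp$, suppose instead that $Q(\phi,\phi)<0$ for some such $\phi$. Because $Q(f_i, \phi) = -\lambda_i \langle f_i, \phi\rangle_{L^2_*(\Sigma)} = 0$ and $Q(f_i, f_j) = -\lambda_i \delta_{ij}$, the form $Q$ restricted to $W \oplus \langle \phi\rangle$ is block-diagonal and negative definite, giving a $(k+1)$-dimensional subspace of $L^2_*(\Sigma)$ on which $Q<0$. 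A small $L^2_*$-perturbation of each $f_i$ by a compactly supported function (possible because $C_0^\infty(\Sigma)$ is dense in $L^2_*(\Sigma)$) then produces a $(k+1)$-dimensional subspace of $C_0^\infty(\Sigma)$ on which $Q<0$, contradicting $\mathrm{Ind}(\Sigma)=k$. This completes the proof.
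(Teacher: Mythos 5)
Your overall strategy (exhaust by compact domains, take weighted Dirichlet eigenfunctions, pass to a limit, and rule out mass escape) is the same as the paper's, but the two steps you gloss over are exactly where the content of the proposition lives, and as stated they do not go through. The main gap is the no-mass-loss step. You assert that the bound $|K|\leq Cu^2$ ``combined with a weighted cutoff argument applied to the eigenfunction equation gives an exponential-type decay'' of the weighted mass near the ends. Try to run that argument: multiplying $\Delta f - 2Kf + \lambda u^2 f=0$ by $\eta^2 f$ and integrating gives, after Cauchy--Schwarz and $|K|\leq Cu^2$,
\begin{equation*}
(-\lambda)\int_\Sigma u^2\eta^2 f^2 \;\leq\; 2\int_\Sigma f^2|\nabla\eta|^2 \;+\; 2C\int_\Sigma u^2\eta^2 f^2,
\end{equation*}
which is vacuous once $2C\geq -\lambda$ (and there is no reason $C$ should be small). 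The missing ingredient is that, because $\mathrm{Ind}(\Sigma)=k<\infty$, one may assume $\Sigma$ is \emph{stable} on the end charts after enlarging the compact core; the stability inequality $-\int 2K(\eta\phi)^2\leq\int|\nabla(\eta\phi)|^2$ on the ends is what absorbs the $2K$ term, and then the negativity $\lambda_{i,R'}\leq-\epsilon_0$ together with a logarithmically spread cutoff ($|\nabla\eta|\leq Cu/\log\beta$) yields only $\int_{\Sigma\setminus B_{e^\beta R}}u^2f_{i,R'}^2\leq C^2/(\epsilon_0\log^2\beta)$ --- a logarithmic, not exponential, decay, which is nonetheless enough. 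You never invoke end-stability, and without it I do not see how your step closes.

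A second, smaller gap is in your final contradiction: you propose to perturb each $f_i$ by a ``small $L^2_*$-perturbation'' into $C_0^\infty(\Sigma)$ while keeping $Q<0$ on the span. But $Q$ contains $\int|\nabla\cdot|^2$ and is not continuous in the $L^2_*$ topology, so density of $C_0^\infty$ in $L^2_*$ does not preserve negativity of $Q$. The correct route is either to cut off $f_i$ with the same log-cutoff and estimate the error $\int f_i^2|\nabla\eta|^2\lesssim \|f_i\|^2_{L^2_*}/\log^2\beta$ (which requires the cutoff machinery you omitted, plus $|\nabla f_i|\in L^2$, proved separately in the paper), or to argue as in Fischer-Colbrie: decompose a hypothetical $\phi\in C_0^\infty\cap W^\perp$ with $Q(\phi,\phi)<0$ against the finite-domain eigenfunctions $f_{i,R'}$, use $\lambda_{k+1,R'}\geq0$ on the orthogonal part, and let $R'\to\infty$ using $\langle\phi,f_{i,R'}\rangle_{L^2_*}\to\langle\phi,f_i\rangle_{L^2_*}=0$. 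Either way, the argument as you wrote it has a hole at this point.
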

\begin{proof}
As opposed to \cite{ChodoshMaximo} (and in similarity with \cite{Fischer-Colbrie}) we work intrinsically, although we follow the same strategy. Let us further assume that $\Sigma$ restricted to the union of the complex charts we have fixed for the ends is stable. For each end of order $m$ consider the set of $|z|\leq R^{1/(1-m)}$ ($|z|\leq e^{-R}$ for $m=1$) and denote by $B_R$ the complement of their union. The set $B_R$ coarsely represents a ball of radius $R$. It is easy to see that for $R$ sufficiently large we have $u^{-1}(-\infty, (R\log R)^{-1}) = \Sigma\setminus B_R$, so in particular $u(z)\geq (R\log R)^{-1}$ for any $z\in B_R$. Similar to \cite[Proposition 1]{Fischer-Colbrie} we fix smooth $\xi:\mathbb{R}\rightarrow\mathbb{R}$ and take a log-cutoff $\psi_R(z)$ defined near each end of order $m$ as $\psi_R(z) = 2-\frac{(m-1)\log|z|^{-1}}{\log R}$ ($\psi_R(z) = 2-\frac{\log(\log|z|^{-1})}{\log R}$ for $m=1$) to define $\eta=1-\xi\circ\psi_R$. This function satisfies
\begin{enumerate}
    \item $\eta\equiv 0$ on $B_R$
    \item $\eta\equiv 1$ on $\Sigma\setminus B_{R^2}$.
    \item $|\nabla\eta|\leq Cu$, $|\Delta\eta|\leq Cu^2$ on $B_{R^2}\setminus B_R$, for some for some constant $C>0$ independent of $R$. In particular we have that $|\nabla\eta|\leq \frac{C}{R\log R}$, $|\Delta\eta|\leq \frac{C}{R^2\log^2R}$. 
\end{enumerate}

Hence for any $\phi\in C^\infty_0(\Sigma)$ we can write the stability inequality for $\eta\phi$ as

\begin{equation}\label{eq:stability}
    -\int_\Sigma 2K(\eta\phi)^2 \leq \int_\Sigma |\nabla(\eta\phi)|^2 = \int_\Sigma \eta^2|\nabla\phi|^2 +2\eta\phi\langle\nabla\eta,\nabla\phi\rangle + \phi^2|\nabla\eta|^2.
\end{equation}

Adding $Q(\phi,\phi) =\int_{\Sigma} |\nabla\phi|^2+2K\phi^2$ to both sides and using Cauchy-Schwartz we obtain

\begin{equation}
\begin{split}
    \int_\Sigma (1-\eta^2)(|\nabla\phi|^2+2K\phi^2) &\leq Q(\phi,\phi) + \int_\Sigma \eta^2|\nabla\phi|^2 + 2\phi^2|\nabla\eta|^2\\
    &\leq 2Q(\phi,\phi) + \int_\Sigma  2\phi^2|\nabla\eta|^2 -2K\phi^2.
\end{split}
\end{equation}

Using the conditions on $\eta$ we get
\begin{equation}\label{eq:gradienbound}
    \int_{B_R} |\nabla\phi|^2 \leq 2Q(\phi,\phi) + \left(\frac{2C^2}{R^2\log^2 R} + 2\sup_{B_{R^2}}|K|\right) \int_{B_{R^2}} \phi^2
\end{equation}

By defining $C_R:= \left(\frac{2C^2}{R\log^2 R} + \sup_{B_{R}}|K|\right)$ and using that $u\leq(R\log R)^{-1}$ in $B_R$, we can conclude that for $R$ sufficiently large
\begin{equation}
    -C_R (R\log R)^{2}\Vert\phi\Vert^2_{L^2_{*}(B_{R}))}\leq-C_R\int_{B_{R}} \phi^2\leq Q(\phi,\phi),
\end{equation}

Taking $R$ even larger if needed, we can assume that for any $R'\geq R$ we have $k=\idx(\Sigma) = \idx(B_{R'})$. Let then $\lbrace f_{1,R'},\ldots f_{k,R'}\rbrace$ and $\lbrace \lambda_{1,R'},\ldots \lambda_{k,R'}\rbrace$ denote the $L^2_{*}$ eigenfunctions and eigenvalues obtained by minimizing the Rayleigh quotient $Q(\phi,\phi)/\Vert\phi\Vert^2_{L^2_{*}(B_R)}$. Since $\max \lbrace \lambda_{1,R'},\ldots \lambda_{k,R'}\rbrace$ is decreasing in $R'$, we have that there exists $\epsilon_0>0$ so that $\lbrace \lambda_{1,R'},\ldots \lambda_{k,R'}\rbrace$ belong to the interval $]-\infty,-\epsilon_0]$ for any $R'>R$.

Extending $\phi=f_{i,R'}$ as $0$ and plugging it in \eqref{eq:stability}, we proceed as in \cite{ChodoshMaximo} by changing our choice of $\eta$ so that 

\begin{enumerate}
    \item $\eta\equiv 0$ on $B_R$
    \item $\eta\equiv 1$ on $\Sigma\setminus B_{R^\beta}$.
    \item $\eta(z) = \frac{1}{\log\beta}\log\left(\frac{(m-1)\log|z|^{-1}}{\log R}\right)$ ($\eta(z) = \frac{1}{\log\beta}\log\left(\frac{\log(\log|z|^{-1})}{\log R}\right)$ for $m=1$) for $B_{R^\beta}\setminus B_{R}$  near an end of order $m$.
    \item $|\nabla\eta|\leq \frac{Cu(z)}{\log\beta}$ on $B_{R^\beta}\setminus B_R$ for some $C>0$ independent of $\beta, R$.
\end{enumerate}
to obtain

\begin{equation}\label{eq:altstabilityapplication}
    \epsilon_0\Vert \eta f_{i,R'}\Vert^2_{L^2_{*}(\Sigma)} \leq -\lambda_{i,R'}\Vert \eta f_{i,R'}\Vert^2_{L^2_{*}(\Sigma)} \leq \int_{\Sigma} (f_{i,R'})^2|\nabla\eta|^2\leq \frac{C^2\int_{B_{R^{\beta}}} u^2 f_{i,R'}^2}{\log^2\beta} .
\end{equation}

Combining \eqref{eq:altstabilityapplication} with $0\leq \eta \leq 1$ and $\eta\equiv 1$ on $\Sigma\setminus B_{R^\beta}$ we obtain

\begin{equation}\label{eq:noL2deltaloss}
    \int_{\Sigma\setminus B_{e^{\beta}R}}u^{2}(f_{i,R'})^2 \leq \Vert \eta f_{i,R'}\Vert^2_{L^2_{*}(\Sigma)} \leq \frac{C^2 \Vert f_{i,R'}\Vert^2_{L^2_{*}(\Sigma)}}{\epsilon_0\log^2\beta} = \frac{C^2}{\epsilon_0 \log^2\beta},
\end{equation}
which implies that $f_{i,R'}$ does not lose mas at infinity by taking arbitrarily large $\beta$.

Also, using \eqref{eq:gradienbound} we get

\begin{equation}\label{eq:W12control}
\begin{split}
    &\int_{B_R} |\nabla f_{i,R'}|^2 \leq \lambda_{i,R'} + C_{R^2}\int_{B_{R^2}}(f_{i,R'})^2 \leq \lambda_{i,R'} + C_{R^2}(4R^4\log^2R) \Vert f_{i,R'} \Vert^2_{L^2_{*}(\Sigma)}\\
    &\Rightarrow \int_{B_R}(f_{i,R'})^2 + |\nabla f_{i,R'}|^2 \leq \lambda_{i,R'} + [R^2\log^2R + C_{R^2}(4R^4\log^2R)] \Vert f_{i,R'} \Vert^2_{L^2_{*}(\Sigma)}
\end{split}
\end{equation}

By equations \eqref{eq:noL2deltaloss} and \eqref{eq:W12control}, a diagonal argument and the compact embeddedness of $W^{1,2}(B_R)$ in $L^2_{*}(B_R)$ we obtain $f_i=\lim_{n\rightarrow\infty}f_{i,R_n}$ for some sequence $R_n\rightarrow+\infty$. By the arguments of \cite[Proposition 2]{Fischer-Colbrie} the functions $\lbrace f_1,\ldots, f_k\rbrace$ are as desired.
\end{proof}

Given a finite-type framed surface $\Sigma$ we define its fundamental divisor as $D_\Sigma = \sum_P n_P.P$, where $n_P=-m$ if $P$ is an end of order $m$, $n_P=m$ if $P$ is a branching point of order $m$ and $n_P=0$ otherwise. In the next Proposition we characterize the $L^2_{*}$ integrability of a meromorphic 1-form by comparing its divisor against the fundamental divisor $D$.

\begin{prop}\label{prop:L2deltaharmonics}
Let $\Sigma$ be a compact and framed surface. Let $\varsigma$ be a meromorphic $1$-form on $\Sigma$. Then $D_\varsigma\geq D_\Sigma$ if and only if $\varsigma$ is in $L^2_{*}(\Sigma)$.
\end{prop}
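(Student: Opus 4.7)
Reduce the proposition to a local integrability calculation at the finitely many ends and branch points of $\overline{\Sigma}$, and then glue. Away from these special points $\overline{\Sigma}$ is compact, the weight $u$ is smooth and bounded away from $0$ and $\infty$, and $\varsigma$ is meromorphic; integrability of $u^2|\varsigma|^2$ against $da$ on such a region is equivalent to $\varsigma$ being holomorphic there, which matches $D_\varsigma(P)\geq 0=D_\Sigma(P)$ at every $P$ in the complement of the special points.

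At an end $P$ of order $m$, work in the fixed chart $z$ centered at $P$, in which $ds^2\sim|z|^{-2m-2}|dz|^{2}$ and $u$ has the explicit asymptotic prescribed in the paper. Two-dimensional conformal invariance of the $L^{2}$ pairing on $1$-forms rewrites the local integrand as $u^{2}|\phi|^{2}\,dx\,dy$, where $\varsigma=\phi(z)\,dz$. Writing $\phi(z)=a_{k}z^{k}+O(z^{k+1})$ with $k=D_\varsigma(P)$ and passing to polar coordinates followed by the substitution $s=\log(1/r)$, the local integral for $m\geq 2$ reduces to
\[
\int_{s_0}^{\infty}\frac{e^{-(2m+2k)s}}{s^{2}}\,ds,
\]
which converges if and only if $2m+2k\geq 0$, i.e.\ $k\geq -m=D_\Sigma(P)$. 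The marginal case $k=-m$ is precisely what dictates the logarithmic factor in $u$; the $m=1$ case uses the iterated-logarithm prescription for $u$ and is handled by the same substitution with one more logarithm.

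At a branch point $P$ of order $m$, use the uniformizing chart of $\overline{\Sigma}$, where the conformal factor of $ds^{2}$ vanishes to order $2m$ while $u$ is smooth and positive. A parallel local computation, balancing the degeneracy of $da$ against the vanishing or pole order of $\varsigma$, produces the equivalence that the local integral converges precisely when $D_\varsigma(P)\geq m=D_\Sigma(P)$.

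Combining these local equivalences, one per end and one per branch point, together with the finite contribution from the compact complement, yields the stated equivalence. The main technical obstacle I anticipate is the borderline case of the end calculation: the logarithmic factor in $u$ must be strong enough to admit the critical-order forms (those attaining the bound $k=-m$) while still ruling out $k=-m-1$, and it is exactly the $s^{-2}$ (respectively iterated-log) tail after the substitution that gives this sharpness. Once this calibration is in place, the remaining local calculations are routine expansions in the prescribed charts.
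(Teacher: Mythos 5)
Your analysis at the ends is correct and is essentially the paper's own argument: conformal invariance of $|\varsigma|^2\,da$ reduces the question to $\int u^{2}|\phi|^{2}\,dx\,dy$ in the fixed chart, and with $u=|z|^{m-1}/((m-1)\log|z|^{-1})$ (iterated logarithm when $m=1$) your substitution $s=\log(1/r)$ gives convergence exactly when ${\rm ord}_P\varsigma\geq -m$, the logarithmic factor supplying precisely the borderline case; this is the paper's condition $(2m-2)+2m'\geq-2$. The compact-complement discussion is likewise fine.

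The branch-point step, however, is a genuine gap. The very conformal invariance you use at the ends cancels the degeneracy exactly: writing $ds^{2}=\lambda^{2}|dz|^{2}$ with $\lambda\sim|z|^{m}$ near a branch point of order $m$, one has $|\varsigma|^{2}_{ds^{2}}\,da=|\phi|^{2}\,dx\,dy$, so there is no ``balancing'' of the degeneracy of $da$ against the order of $\varsigma$ left to perform. Since $u$ is a smooth positive function on $\Sigma$ whose special behavior is prescribed only at the ends, it is bounded above and below near a branch point, and the local integral therefore converges if and only if ${\rm ord}_P\varsigma\geq 0$ --- not if and only if ${\rm ord}_P\varsigma\geq m$. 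So the local equivalence you assert at branch points does not follow from the computation you describe (only the ``if'' direction, $D_\varsigma\geq D_\Sigma\Rightarrow\varsigma\in L^2_{*}$, does). Note that the paper's own proof carries out only the end computation and is silent at branch points, and the application in Theorem~A really uses the ``if'' direction; but your proposal, as written, claims a routine calculation whose honest outcome contradicts its stated conclusion. Forcing vanishing to order $m$ at a branch point would require a different mechanism --- for instance a weight blowing up there, or a condition on the contracted functions $\langle w,dx_i\rangle$, which do involve dividing by the degenerate metric --- rather than the conformally invariant integral you set up.
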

\begin{proof}
Consider an end $P$ of order $m$ with its fixed complex chart to the origin and let $m'$ be the order of $\varsigma$ at $P$. As  the square norm of a $1$-form times the area form is a conformal invariant, $L^2_{*}$ integrability of $\varsigma$ near $P$ reduces to prove

\[\Vert z^{m'}dz\Vert^2_{L^2_{*}(\mathbb{D}\setminus\lbrace0\rbrace} = \int_{\mathbb{D}\setminus\lbrace0\rbrace} u^{2}|z|^{2m'}dxdy<\infty
\]

If $m>1$ this reduces to $\int_{\mathbb{D}\setminus\lbrace0\rbrace} \frac{|z|^{(2m-2)+2m'}}{\log^2|z|^{-1}}dxdy<\infty$ which is true if and only if $(2m-2)+2m'\geq -2$.

For $m=1$ this reduces to $\int_{\mathbb{D}\setminus\lbrace0\rbrace)} \frac{|z|^{2m'}}{\log^2|z|^{-1}\log^2(\log|z|^{-1})}dxdy<\infty$ which is finite if and only if $m'\geq -1$.

\end{proof}

Next, we will see that weighted eigenfunction have square integrable gradient.

\begin{prop}\label{prop:L^2gradient}
Let $f\in L^2_{*}(\Sigma)$ be a smooth function satisfying the weighted eigenfunction equation. then $|\nabla f|\in L^2(\Sigma)$.
\end{prop}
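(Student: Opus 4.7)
The plan is a standard cutoff + integration-by-parts argument adapted to the weighted setting, leveraging the two pointwise bounds already established in this section: $|K|\le Cu^{2}$ (recorded right after the definition of $L^{2}_{*}$) and $|\nabla\eta_R|\le Cu$ (the logarithmic cutoffs constructed in the proof of Proposition \ref{prop:weightedspace}).

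First I would pick the family of cutoff functions $\eta_R$ from the proof of Proposition \ref{prop:weightedspace} satisfying $\eta_R\equiv 1$ on $B_R$, $\eta_R\equiv 0$ outside $B_{R^{2}}$, and $|\nabla\eta_R|\le Cu$, with $C$ independent of $R$. Multiplying the weighted eigenfunction equation $\Delta f-2Kf+\lambda u^{2}f=0$ by $\eta_R^{2}f$ and integrating by parts, which is justified since $\eta_R$ has compact support and $f$ is smooth, yields
\begin{equation*}
\int_{\Sigma}\eta_R^{2}|\nabla f|^{2}\,da \;=\; -2\int_{\Sigma}\eta_R f\,\langle \nabla\eta_R,\nabla f\rangle\,da \;-\;2\int_{\Sigma}K f^{2}\eta_R^{2}\,da\;+\;\lambda\int_{\Sigma}u^{2}f^{2}\eta_R^{2}\,da.
\end{equation*}

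Next I would absorb the cross term by Cauchy--Schwarz:
\begin{equation*}
\Bigl|2\int_{\Sigma}\eta_R f\,\langle \nabla\eta_R,\nabla f\rangle\,da\Bigr|\;\le\;\tfrac{1}{2}\int_{\Sigma}\eta_R^{2}|\nabla f|^{2}\,da \;+\; 2\int_{\Sigma}f^{2}|\nabla\eta_R|^{2}\,da,
\end{equation*}
so rearranging gives
\begin{equation*}
\tfrac{1}{2}\int_{\Sigma}\eta_R^{2}|\nabla f|^{2}\,da\;\le\; 2\int_{\Sigma}f^{2}|\nabla\eta_R|^{2}\,da \;+\; 2\int_{\Sigma}|K|f^{2}\eta_R^{2}\,da\;+\;|\lambda|\int_{\Sigma}u^{2}f^{2}\,da.
\end{equation*}
Now invoke the two bounds: $|\nabla\eta_R|^{2}\le C^{2}u^{2}$ makes the first term at most $2C^{2}\|f\|_{L^{2}_{*}(\Sigma)}^{2}$, and $|K|\le Cu^{2}$ makes the second term at most $2C\|f\|_{L^{2}_{*}(\Sigma)}^{2}$. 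The third term is directly $|\lambda|\|f\|_{L^{2}_{*}(\Sigma)}^{2}$. Thus the right-hand side is bounded by a constant independent of $R$.

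Finally, since $\eta_R\equiv 1$ on $B_R$ and $B_R\nearrow \Sigma$, monotone convergence yields
\begin{equation*}
\int_{\Sigma}|\nabla f|^{2}\,da\;=\;\lim_{R\to\infty}\int_{B_R}|\nabla f|^{2}\,da\;\le\;\lim_{R\to\infty}\int_{\Sigma}\eta_R^{2}|\nabla f|^{2}\,da\;<\;\infty,
\end{equation*}
which is the desired conclusion. I do not anticipate a serious obstacle: the only delicate point is verifying that the cutoffs $\eta_R$ of Proposition \ref{prop:weightedspace} can be reused verbatim (which they can, since both $|\nabla\eta_R|\le Cu$ and the geometric relation $|K|\le Cu^{2}$ hold globally near each end), and ensuring that integration by parts is valid, which is automatic thanks to the compact support of $\eta_R$.
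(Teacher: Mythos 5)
Your argument is correct and is essentially the paper's: both are Caccioppoli-type estimates using the logarithmic cutoffs of Proposition \ref{prop:weightedspace} together with the bounds $|K|\le Cu^{2}$ and $|\nabla\eta_R|\le Cu$ to control everything by $\Vert f\Vert_{L^2_*}^2$. The only (cosmetic) differences are that you use the complementary, compactly supported cutoff ($\equiv 1$ on $B_R$, $\equiv 0$ outside $B_{R^2}$) and absorb the cross term by Cauchy--Schwarz, whereas the paper keeps the cutoff equal to $1$ near the ends and integrates the cross term by parts a second time, using $|\Delta\eta|\le Cu^{2}$; your variant has the small advantage that the integration by parts is automatic from compact support.
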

\begin{proof}
Following \cite{ChodoshMaximo} we take $\eta$ as in the first part of the proof of Proposition \ref{prop:weightedspace}. Our goal will be to prove that $\int_{\Sigma}\eta^2|\nabla f|^2$ is uniformly bounded.

\[
\begin{split}
    \int_{\Sigma}\eta^2|\nabla f|^2 &= -\int_{\Sigma} \eta^2 f\Delta f - \frac12 \int_{\Sigma} \nabla \eta^2 .\nabla f^2\\
    &=-\int_{\Sigma}2K\eta^2f^2 + \lambda \int_{\Sigma}\eta^2f^2u^2 - \frac12 \int_{\Sigma} \nabla \eta^2 .\nabla f^2
\end{split}
\]

As $|K|\leq Cu^2$ and $f\in L^2_{*}(\Sigma)$, we only have to check that $\int_{\Sigma} \nabla \eta^2 .\nabla f^2$ is uniformly bounded.

\begin{equation}
    - \int_{\Sigma} \nabla \eta^2 .\nabla f^2 = \int_{\Sigma} f^2\Delta\eta^2 = 2\int_{\Sigma}f^2\eta\Delta\eta + 2\int_{\Sigma} f^2|\nabla\eta|^2
\end{equation}

We bound the second term by

\begin{equation}
\begin{split}
    \int_{\Sigma} f^2|\nabla\eta|^2 &\leq \frac{C^2}{R^4\log^2 R}\int_{B_{R^2}\setminus B_R}f^2\\
    &\leq \frac{4C^2R^4\log^2R}{R^4\log^2 R} \Vert f\Vert^2_{L^2_{*}} = 4C^2\Vert f\Vert^2_{L^2_{*}}
\end{split}
\end{equation}

We bound the first term by

\begin{equation}
\begin{split}
    \int_{\Sigma} f^2\eta\Delta\eta &\leq \frac{C^2}{R^4\log^2 R}\int_{B_{R^2}\setminus B_R}f^2\\
    &\leq \frac{4C^2R^4\log^2R}{R^4\log^2 R} \Vert f\Vert^2_{L^2_{*}} = 4C^2\Vert f\Vert^2_{L^2_{*}}
\end{split}
\end{equation}
\end{proof}

The next Lemma (which concerns the square integrability of the gradient of a harmonic form in $L^2_{*}$) follows as done for \cite[Lemma 4.2]{ChodoshMaximo}, so we omit the proof here.

\begin{lem}\label{lem:L2gradientw}
Let $w\in L^2_{*}(\Sigma)$ be harmonic form. Then $|\nabla w|\in L^2(\Sigma)$.
\end{lem}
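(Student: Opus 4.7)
The plan is to mirror the cutoff argument of Proposition \ref{prop:L^2gradient}, replacing the eigenfunction equation by the Weitzenb\"ock/Bochner identity for $1$-forms. On a Riemann surface of Gauss curvature $K$ one has $\mathrm{Ric}=K\cdot g$, so a harmonic $1$-form $w$ satisfies the rough-Laplacian identity $\nabla^{*}\nabla w=-Kw$. This is the natural analog of $\Delta f=2Kf-\lambda u^{2}f$ for an eigenfunction: the eigenvalue piece drops out, and the curvature coefficient is still controlled pointwise by $|K|\leq Cu^{2}$.

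First I would take the log-cutoff $\eta$ constructed in Proposition \ref{prop:weightedspace}, which satisfies $\eta\equiv 0$ on $B_{R}$, $\eta\equiv 1$ outside $B_{R^{2}}$, and $|\nabla \eta|\leq Cu$. Pairing $\eta^{2}w$ with the Bochner equation and integrating by parts would yield
\[
\int_{\Sigma}\eta^{2}|\nabla w|^{2}=-\int_{\Sigma}\eta^{2}K|w|^{2}-\int_{\Sigma}\langle\nabla\eta^{2},\,w\cdot\nabla w\rangle,
\]
where $(w\cdot\nabla w)_{i}:=w^{j}\nabla_{i}w_{j}$. The curvature term is uniformly bounded by $C\Vert w\Vert^{2}_{L^{2}_{*}(\Sigma)}$ thanks to $|K|\leq Cu^{2}$, and the cross term would be handled by weighted Cauchy--Schwarz $2ab\leq\varepsilon a^{2}+\varepsilon^{-1}b^{2}$ together with $|\nabla\eta|\leq Cu$: the $\varepsilon\int_{\Sigma}\eta^{2}|\nabla w|^{2}$ piece gets absorbed into the left-hand side and the remainder becomes a multiple of $\int_{\Sigma}u^{2}|w|^{2}<\infty$.

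This would produce a bound on $\int_{\Sigma}\eta^{2}|\nabla w|^{2}$ independent of $R$. Letting $R\to\infty$ so that $\eta\nearrow 1$ and invoking monotone convergence would then yield $|\nabla w|\in L^{2}(\Sigma)$. The main technical obstacle, just as in Proposition \ref{prop:L^2gradient}, is arranging the cross-term absorption: it relies simultaneously on the pointwise bound $|\nabla\eta|\leq Cu$ built into the specific log-cutoff of Proposition \ref{prop:weightedspace} and on the hypothesis $\int_{\Sigma}u^{2}|w|^{2}<\infty$. Neither ingredient is available for a coarser choice of cutoff, which is why the particular $\eta$ from Proposition \ref{prop:weightedspace} is essential here.
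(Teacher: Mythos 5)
Your plan is essentially the argument the paper is pointing to (it defers to \cite[Lemma 4.2]{ChodoshMaximo}): the Weitzenb\"ock identity $\nabla^*\nabla w=-Kw$ for a harmonic $1$-form on a surface is exactly the replacement for the eigenfunction equation that makes the proof of Proposition \ref{prop:L^2gradient} go through verbatim, and your core estimate --- curvature term controlled by $|K|\leq Cu^2$, cross term absorbed via $|\nabla\eta|\leq Cu$ and weighted Cauchy--Schwarz, everything landing on $\Vert w\Vert^2_{L^2_*}$ --- is correct and is the heart of the matter.

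Two small points. First, the concluding limit is stated backwards: the cutoff of Proposition \ref{prop:weightedspace} vanishes on the compact core $B_R$ and equals $1$ \emph{near the ends}, so as $R\to\infty$ one has $\eta\searrow 0$ pointwise, not $\eta\nearrow 1$, and monotone convergence as you invoke it does not apply. No limit is needed, though: a single fixed large $R$ gives $\int_{\Sigma\setminus B_{R^2}}|\nabla w|^2\leq\int_\Sigma\eta^2|\nabla w|^2\leq C\Vert w\Vert^2_{L^2_*}<\infty$, and on the compact piece $B_{R^2}$ square-integrability of $\nabla w$ is automatic from smoothness; this is also how Proposition \ref{prop:L^2gradient} should be read. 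Second, the absorption step $\varepsilon\int\eta^2|\nabla w|^2\leq\varepsilon\cdot(\text{LHS})$ presupposes that $\int_\Sigma\eta^2|\nabla w|^2$ is a priori finite (the support of $\eta^2 w$ reaches the punctures), so strictly one should first run the identity on a compact exhaustion and pass to the limit; the same caveat applies to the integration by parts in Proposition \ref{prop:L^2gradient}, so this is a shared, routine omission rather than a defect of your approach.
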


\section{Proof of Theorem \ref{mainthm:index}}\label{sec:ProofA}
We follow the same approach as \cite{Ros06} and \cite{ChodoshMaximo}. Given a harmonic $1$-form $w$ we define the vector field

\[X_w:=(\langle w,dx_1\rangle, \langle w,dx_2\rangle, \langle w,dx_3\rangle)
\]

By \cite[Lemma 1]{Ros06} we have that

\[\Delta X_w -2KX_w = 2\langle \nabla w,{\rm Re}(\sigma)\rangle g
\]
where we recall that $K$ is the Gaussian curvature, ${\rm Re}(\sigma)$ is the second fundamental form and $g$ is the Gauss map. Moreover, $\langle \nabla w,{\rm Re}(\sigma)\rangle\equiv 0$ if and only if $w\in {\rm span}\lbrace *dx_1, *dx_2,*dx_3\rangle$

As in \cite{ChodoshMaximo}, given two vector fields $X,Y$ in $\Sigma$ we will denote the sum $\sum_{i=1}^3Q(X_i,Y_i)$ by $Q(X,Y)$. 

Given $k={\rm Ind}(\Sigma)$, Proposition \ref{prop:weightedspace} says that we have $L^2_*$ eigenfunctions $f_1,\ldots,f_k$ with span $W\subset L^2_*(\Sigma)$ so that for any $\phi\in C^\infty_0(\Sigma)\cap W^\perp$ we have that $Q(\phi,\phi)\geq 0$. By Proposition \ref{prop:L2deltaharmonics} we have that the space $V=L^2_*(\Sigma)\cap \mathcal{H}^1(\Sigma)$ has dimension equal to $2h^1(D)$, where $D$ is the divisor defined before Proposition \ref{prop:L2deltaharmonics}.

Our goal will be to show that if $w\in V$ satisfies $X_w\in W^\perp$, then $\langle \nabla w,{\rm Re}(\sigma)\rangle\equiv 0$, and subsequently $w\in {\rm span}\lbrace *dx_1, *dx_2,*dx_3\rangle$.

Suppose then that we have $w\in V$ satisfying $X_w\in W^\perp$. We will show that for any compactly supported vector field $Y\in W^\perp$, we have that $Q(X_w,Y)=0$. Take then $R$ sufficiently large so that $B_R$ contains the support of $Y$, and define

\[X_t :=\eta(X_w + tY + f_1\overrightarrow{c_1} + \ldots + f_k\overrightarrow{c_k})
\]
where $\eta=\eta(R)$ is the first test function used in the proof of Proposition \ref{prop:weightedspace}, and the vectors $\overrightarrow{c_1}, \ldots, \overrightarrow{c_k}$ are chosen so that $X_t\in W^\perp$. As $\eta Y= Y$ and $Y\in W^\perp$, we have then that for any $1\leq j\leq k$

\[\int_{\Sigma} \eta(X_w + f_1\overrightarrow{c_1} + \ldots + f_k\overrightarrow{c_k})f_ju^2 = 0
\]

As in \cite{Ros06}, \cite{ChodoshMaximo} from the stability inequality we have that

\begin{equation}\label{eq:mainQXY}
\begin{split}
    Q(X_w,Y)^2 &\leq Q(Y,Y)\times\\&\Bigg(Q(\eta X_w, \eta X_w) + 2\sum_{j=1}^k Q(\eta X_w, \eta f_j\overrightarrow{c_j}) + \sum_{j,\ell=1}^k Q(\eta f_j\overrightarrow{c_j}, \eta f_\ell\overrightarrow{c_\ell}) \Bigg)
\end{split}
\end{equation}

As in \cite{ChodoshMaximo} we have that we can express $Q(\eta X_w, \eta X_w)$ as
\begin{equation}
\begin{split}
    Q(\eta X_w, \eta X_w) &= \int_{\Sigma} |\nabla\eta|^2|\eta X_w|^2.
\end{split}
\end{equation}
Given that $|\nabla \eta|\leq \frac{C|z|^{m-1}}{\log R}$ we have that

\begin{equation}
\begin{split}
    \int_{\Sigma} |\nabla\eta|^2|\eta X_w|^2 &\leq  C\int_{B_{R^2}\setminus B_R} |X_w|^2|\nabla\eta|^2\\
    &\leq C\int_{B_{R^2}\setminus B_R} |X_w|^2u^2
\end{split}
\end{equation}
goes to $0$ as $R\rightarrow+\infty$, since $X_w\in L^2_*(\Sigma)$.

For the second term in \eqref{eq:mainQXY} we obtain as in \cite{ChodoshMaximo}
\begin{equation}\label{eq:secondtermQXY}
\begin{split}
    Q(\eta X_w, \eta f_j\overrightarrow{c_j}) &= \lambda_j \int_{\Sigma}(\eta)^2 \langle X_w, \overrightarrow{c_j}\rangle f_j u^2 - \int_{\Sigma} \eta\Delta\eta \langle X_w, \overrightarrow{c_j}\rangle f_j \\&-2 \int_{\Sigma} \eta \langle X_w, \overrightarrow{c_j}\rangle \langle\nabla\eta,\nabla f_j\rangle
\end{split}
\end{equation}
The first term in \eqref{eq:secondtermQXY} goes to $0$ by dominated convergence as $R\rightarrow+\infty$, since $X_w, f_j\in L^2_*(\Sigma)$.

For the middle term in \eqref{eq:secondtermQXY} we see that
\begin{equation}\label{eq:2btermQXY}
\begin{split}
    \Bigg|\int_{\Sigma} \eta\Delta\eta \langle X_w, \overrightarrow{c_j}\rangle f_j\Bigg| &\leq|\overrightarrow{c_j}|\int_{B_{R^2}\setminus B_R}|X_w||f_j| |\Delta\eta| \\&\leq C\int_{B_{R^2}\setminus B_R}|X_w||f_j| u^2
\end{split}
\end{equation}
goes to $0$ as $R\rightarrow+\infty$, since $X_w, f_j\in L^2_*(\Sigma)$. 

For the third term in \eqref{eq:secondtermQXY} apply Cauchy-Schwartz to get
\begin{equation}\label{eq:2ctermQXY}
\begin{split}
    \Bigg|\int_{\Sigma} \eta \langle X_w, \overrightarrow{c_j}\rangle \langle\nabla\eta,\nabla f_j\rangle\Bigg| \leq |\overrightarrow{c_j}| \left(\int_{\Sigma} |\nabla\eta|^2|\eta X_w|^2\right)^{1/2} \left(\int_{\Sigma}|\nabla f_j|^2 \right)^{1/2},
\end{split}
\end{equation}
which goes to $0$ as $R\rightarrow+\infty$ as in \ref{eq:2btermQXY}, since $X_w, f_j\in L^2_*(\Sigma)$. Thus \eqref{eq:secondtermQXY} goes to $0$ as $R\rightarrow+\infty$.

Mirroring again \cite{ChodoshMaximo} we obtain for the third term in \eqref{eq:mainQXY}

\begin{equation}\label{eq:thirdtermQXY}
\begin{split}
    Q(\eta f_j\overrightarrow{c_j}, \eta f_\ell\overrightarrow{c_\ell}) =  \frac12(\lambda_j+\lambda_\ell)\langle\overrightarrow{c_j}, \overrightarrow{c_\ell}\rangle \int_{\Sigma}\eta^2f_jf_\ell u^2 + \langle \overrightarrow{c_j}, \overrightarrow{c_\ell}\rangle \int_{\Sigma} |\nabla\eta|^2f_jf_\ell.
\end{split}
\end{equation}
Since $f_j, f_\ell\in L^2_*(\Sigma)$ and $\overrightarrow{c_j}, \overrightarrow{c_\ell}\xrightarrow{R\rightarrow+\infty} 0$, we have that \eqref{eq:thirdtermQXY} goes to $0$ as $R\rightarrow+\infty$.

Following \cite{ChodoshMaximo} for any smooth vector field $Y\in W^\perp$ and $R$ sufficiently large, we can take vector $\overrightarrow{c_j}$ so that

\[Y_R = \eta(Y+c_1f_1+\ldots c_kf_k)
\]
belong to $W^\perp$. By the previous discussion we have that

\begin{equation}
\begin{split}
    0=Q(X_w,Y_R) = -\int_{\Sigma}\langle \Delta X_w -2KX_w , Y_R\rangle =-2 \int_{\Sigma}\langle \nabla w,{\rm Re}(\sigma)\rangle\langle N , Y_R\rangle
\end{split}
\end{equation}

\begin{equation}
    0=\int_{\Sigma}\langle \nabla w,{\rm Re}(\sigma)\rangle\langle N , Y\rangle
\end{equation}

From Proposition \ref{prop:weightedspace} we get for any $\overrightarrow{\alpha}\in\mathbb{R}^3$ and $f_j$ eigenfunction
\begin{equation}
    0=\int_{\Sigma}\langle \nabla w,{\rm Re}(\sigma)\rangle\langle N , f_j\overrightarrow{\alpha}\rangle
\end{equation}
Hence $\langle\nabla w,{\rm Re}(\sigma)\rangle\equiv0$, which in turn implies $w\in {\rm span}\lbrace*dx_1, *dx_2, *dx_3 \rbrace$ as desired.

As the space $V=L^2_*(\Sigma)\cap \mathcal{H}^1(\Sigma)$ has dimension $2h^1(D)$ and $X_w\in W^\perp$ is a system of $3k$ equations on $V$ whose kernel has dimension $\leq3$, then $3k\geq 2h^1(D)-3$, or equivalently $k\geq \frac{2h^1(D)-3}{3}$.

\section{Proof of Theorem \ref{mainthm:non-orientable}}\label{sec:ProofB}

In the same manner as \cite{ChodoshMaximo} we have that if $\Sigma$ is one-sided then the involution defined in the double covering $\widehat{\Sigma}$ defines an isomorphism between invariant and anti-invariant harmonic forms, denoted by $\mathcal{H}^1_+(\widehat{\Sigma})$, $\mathcal{H}^1_-(\widehat{\Sigma})$, respectively. In particular each of the spaces $\mathcal{H}^1_+(\widehat{\Sigma})\cap L^2_*(\widehat{\Sigma})$, $\mathcal{H}^1_-(\widehat{\Sigma})\cap L^2_*(\widehat{\Sigma})$ has real dimension $h^1(D)$.

As the forms $w\in \mathcal{H}^1_-(\widehat{\Sigma})$ satisfy that $X_w$ can be used as a test function as in the proof of Theorem \ref{mainthm:index}, we argue as in the previous proof and in parallel to \cite{ChodoshMaximo} to conclude that $k\geq \frac{h_1(D)-3}{3}$.

\bibliographystyle{amsalpha}
\bibliography{mybib}
\end{document}